\numberwithin{equation}{section} 
\newtheorem{thm}[equation]{Theorem}
\newtheorem{prop}[equation]{Proposition}
\newtheorem{lemma}[equation]{Lemma}
\newtheorem{cor}[equation]{Corollary}
\newtheorem{example}[equation]{Example}
\newtheorem{remark}[equation]{Remark}
\newtheorem{definition}[equation]{Definition}
\newtheorem{defns}[equation]{Definitions}
\newenvironment{rem}{\begin{remark}\rm}{\end{remark}}
\DeclareMathOperator{\coop}{coop}
\DeclareMathOperator{\op}{op}
\newcommand{\DOT}{\setlength{\unitlength}{1pt}\begin{picture}(2.5,2)
               (1,1)\put(2,3.5){\circle*{3}}\end{picture}}
\newcommand{\e}{\epsilon}
\newcommand{\db}{\DOT_{\beta}}
\renewcommand{\_}[1]{\mbox{$_{\left( #1 \right)}$}}
\renewcommand{\k}{{\mathbb K}}
\newcommand{\coh}{{\rm H}}
\newcommand{\Z}{{\mathbb Z}}
\newcommand{\N}{{\mathbb N}}
\newcommand{\id}{\mbox{\rm id\,}}
\newcommand{\Hom}{\mbox{\rm Hom\,}}
\newcommand{\half}{\frac{1}{2}}
\newcommand{\w}{\omega}
\newcommand{\ufn}{{\mathfrak{u}}_q({\mathfrak{sl}}_n)}
\newcommand{\ursn}{{\mathfrak{u}}_{r,s}({\mathfrak{sl}}_n)}
\newcommand{\urs}{{\mathfrak{u}}_{r,s}({\mathfrak{sl}}_3)}
\newcommand{\E}{\mathcal E}
\newcommand{\F}{\mathcal F}
\newcommand{\YD}{ ${\,}_H\mathcal{YD}^H$}
\newcommand{\Id}{\mathop{\rm Id}}
\renewcommand{\Im}{\mathop{\rm Im\,}}
\renewcommand{\b}{\mathfrak{b}}
\newcommand{\ot}{\otimes}
\begin{document}

\title{Yetter-Drinfeld Modules Under Cocycle Twists}

\author{Georgia Benkart}
\address{Department of Mathematics, University of Wisconsin,
Madison, Wisconsin 53706, USA}
\email{benkart@math.wisc.edu}

\author{Mariana Pereira}
\address{Centro de Matem\'atica, Facultad de Ciencias,
Igu\'a 4225, Montevideo 11400, Uruguay}
\email{mariana@cmat.edu.uy}

\author{Sarah Witherspoon}
\address{Department of Mathematics, Texas A\&M University,
College Station, Texas 77843, USA}
\email{sjw@math.tamu.edu}

\date{October 14, 2009}

\thanks{ \textbf{Keywords}: Hopf algebra, cocycle twist, Yetter-Drinfeld module,  Drinfeld double, two-parameter
quantum group.\hfil\break
\indent {\bf 2010 Mathematics Subject Classification}:  16T05, 20G42, 81R50 \hfil\break
{\em To appear in Journal of Algebra}}

\maketitle
\begin{center} {\emph{To Susan Montgomery in honor of her distinguished career}}
\end{center}
\begin{abstract}
We give an explicit formula for the correspondence between simple Yetter-Drinfeld
modules for certain finite-dimensional
pointed Hopf algebras $H$ and those for cocycle twists
$H^{\sigma}$ of $H$. This implies an equivalence between modules
for their Drinfeld doubles.   To illustrate our results, we consider the restricted
two-parameter quantum groups ${\mathfrak{u}}_{r,s}({\mathfrak{sl}}_n)$
under conditions on the parameters  guaranteeing that
${\mathfrak{u}}_{r,s}({\mathfrak{sl}}_n)$ is a Drinfeld double of its
Borel subalgebra. We  determine
explicit correspondences between $\ursn$-modules for different values of
$r$ and $s$  and provide examples where no such correspondence can exist.
Our examples were obtained via the computer algebra system {\sc Singular::Plural}.
\end{abstract}

\section{Introduction}
Radford \cite{radford} gave a construction of simple Yetter-Drinfeld modules for
a pointed Hopf algebra $H$,  whose group $G(H)$ of grouplike elements is abelian,
under fairly general hypotheses on $H$.
These simple modules are in one-to-one correspondence with the Cartesian product of
$G(H)$ with its dual group, and are realized as vector subspaces of the Hopf algebra $H$  itself.
Radford and Schneider \cite{radford-schneider} generalized this method to include
a much wider class of Hopf algebras given by cocycle twists of
tensor product Hopf algebras.
Again the simple modules are in one-to-one correspondence with a set of
characters, but this time each simple module is realized as the quotient
of a Verma module by its unique maximal submodule
(equivalently, by the radical of the Shapovalov form)
reminiscent of standard Lie-theoretic methods.
Thus,  the work of Radford and Schneider ties Radford's explicit
realization of simple modules as vector subspaces of the Hopf
algebra to more traditional methods.
One advantage of Radford's approach is that it is purely Hopf-theoretic,
and so a priori there are no restrictions on parameters as often
occur in Lie theory.

In this paper, we determine how Radford's construction behaves under cocycle twists.
We give a precise correspondence between simple Yetter-Drinfeld modules of $H$
and those of  the cocycle twist $H^\sigma$   for a large class
of Hopf algebras $H$ (see Theorem \ref{modules-twist} below).
This result uses a theorem of
Majid and Oeckl \cite{majid-oeckl} giving a category equivalence between
Yetter-Drinfeld modules for $H$ and those for $H^{\sigma}$.
We then apply a result of
Majid \cite{majid} relating Yetter-Drinfeld $H$-modules
for a finite-dimensional Hopf algebra $H$  to modules
for the Drinfeld double $D(H)$ of $H$.
In this way we obtain an explicit formulation of
a category equivalence between $D(H)$-modules
and $D(H^{\sigma})$-modules.

In Sections 4 and 5,
we specialize to the setting of restricted quantum groups.
We use  Radford's construction to analyze in detail
the simple modules for the restricted two-parameter quantum groups $\mathfrak{u}_{r,s}(\mathfrak{sl}_n)$;
that is,  two-parameter versions of the finite-dimensional quantum group
${\mathfrak{u}}_q({\mathfrak{sl}_n})$, where  $q$ is a root of unity
(in fact, ${\mathfrak{u}}_q({\mathfrak{sl}_n})$ is a quotient of
$\mathfrak{u}_{q,q^{-1}}(\mathfrak{sl}_n)$).
Some conditions on the parameters $r$ and $s$ are known under which
a finite-dimensional two-parameter quantum group
$\ursn$ is the Drinfeld double of a Borel subalgebra
(see e.g.\ \cite[Thm.~4.8]{benkart-witherspoon} for the case ${\mathfrak{g}}=
{\mathfrak{sl}}_n$, \cite[Thm.\ 6.2]{hu-wang} for the case ${\mathfrak{g}}
= {\mathfrak{so}}_{2n+1}$,
 and  \cite[Ex.~3.13]{burciu} for the case $s=r^{-1}$).
When the conditions hold, ${\mathfrak{u}}_{r,s}({\mathfrak{g}})$-modules
correspond to Yetter-Drinfeld
modules for the Borel subalgebra, and these in turn are given by Radford's construction.
For $n \geq 3$ (under a mild assumption)
there is no Hopf algebra isomorphism between $\ursn$ and
 ${\mathfrak{u}}_{q,q^{-1}} (\mathfrak{sl}_n)$  unless $r= q^{\pm 1}, s = q^{\mp 1}$,
(see \cite[Thm.~5.5]{hu-wang})
On the other hand,  computations in {\sc Singular::Plural}
 show that the representations of $\ursn$ and $\mathfrak{u}_{q,q^{-1}}(
\mathfrak{sl}_n)$ can be quite
similar when the parameters are related in certain ways.
We give a precise explanation for this similarity
using our results on cocycle twists.
In Theorem \ref{prop:findim},  we exploit  an explicit cocycle twist that
yields an equivalence of categories of Yetter-Drinfeld modules
for the respective Borel subalgebras $H_{r,s}$ and
$H_{q,q^{-1}}$, and hence an equivalence of the categories of modules
for $\ursn$ and $\mathfrak{u}_{q,q^{-1}}(\mathfrak{sl}_n)$, under some
conditions on the parameters.
Cocycle twists different from the ones used here have been shown
to give rise to other multi-parameter versions of
quantum groups  in earlier
work by Reshetikhin \cite{reshetikhin}, by Doi \cite{doi93},
and by Chin and Musson \cite{chin-musson}.

For particular choices of the parameters, however, there is no such
cocycle twist, and in that situation the representation theories of $\ursn$ and of  $\mathfrak{u}_{q,q^{-1}}(
\mathfrak{sl}_n)$ can be quite different.
As an example of this phenomenon, we have used the
computer algebra system {\sc Singular::Plural}
to show in Example \ref{example1} below that for $q$ a
primitive fourth root of unity, the dimensions of the simple
modules for ${\mathfrak{u}}_{1,q}({\mathfrak{sl}}_3)$ and for
${\mathfrak{u}}_{q,q^{-1}}({\mathfrak{sl}}_3)$ differ significantly.
For a wide class of such examples, Radford's construction lends itself
to computations using Gr\"obner basis techniques.   We further illustrate
the construction by briefly explaining how to use {\sc Singular::Plural} to compute bases and dimensions
of {\em all} simple modules for some of the one- and two-parameter
quantum groups  ${\mathfrak{u}}_{q}({\mathfrak{sl}}_3), \urs$.
Many such computations appeared in the second author's Ph.D.\
thesis \cite{pereira1}.

\section{Preliminaries}

\noindent {\bf Drinfeld doubles and Yetter-Drinfeld modules}
\medskip

Let $H$ be a finite-dimensional Hopf algebra over a field $\k$ with
coproduct $\Delta$, counit $\varepsilon$, and  antipode $S$.
In this paper, we generally assume that $\k$ is algebraically closed of
characteristic 0, although this is not needed for the definitions.

The {\em Drinfeld double of $H$}, denoted $D(H)$, is the Hopf algebra defined to be
$$D(H)=(H^*)^{\coop}\otimes H$$ as a coalgebra, and the algebra structure is given by
$$ (f\otimes a)(f' \otimes b) = \sum f\left( a\_1 \rightharpoonup f' \leftharpoonup S^{-1}(a\_3) \right) \otimes a\_2b,$$
for all $f,f'\in H^*$ and $a,b \in H$, where
$\langle x \rightharpoonup f \mid  y\rangle = \langle f\mid yx \rangle $
and $\langle f \leftharpoonup x \mid y \rangle =\langle f \mid xy\rangle$,
for all $x,y \in H$ and $f \in H^*$.   In these expressions  $\langle\,\, \mid \,\,\rangle$ is the
natural pairing between $H$ and the dual Hopf algebra $H^* = \hbox{\rm Hom}_\k(H,\k)$, and $(H^*)^{\coop}$  is $H^*$  with the opposite coproduct.
(In the above equation and throughout the paper, we are adopting Sweedler notation.)
With this structure and a suitable counit and antipode, $D(H)$ is a
Hopf algebra. (See \cite[Defn.\ 10.3.5]{Mo} for details.)

For any bialgebra $H$, a {\em left-right Yetter-Drinfeld $H$-module} $(M, \cdot, \delta)$
is a left $H$-module $M$ (with action denoted $a \cdot m$ for $a\in H$ and $m\in M$)
that is also a right $H$-comodule via $\delta:M
\rightarrow M\ot H$, written $\delta(m)=\sum m_{(0)}\ot m_{(1)}$ for
all $m\in M$, such that
the following compatibility condition is satisfied:
$$\sum a\_1\cdot m\_0 \otimes a\_2 m\_1 = \sum ( a\_2\cdot m)\_0 \otimes (a\_2\cdot m)\_1 a\_1$$
for all $a\in H$, $m\in M$.
There is also a notion of a right-left Yetter-Drinfeld module, which we do not consider here.

{\it In what follows, all references to Yetter-Drinfeld modules are to left-right ones and
all modules considered are left modules.}

The category of Yetter-Drinfeld modules over a bialgebra $H$ will be denoted by \YD
and the isomorphism class of  $M\in$\YD  \ by $[M]$.
Yetter-Drinfeld $H$-modules are  $D(H)$-modules
and conversely as a result of the following theorem, which is \cite[Prop.\ 2.2]{majid}
(see also \cite[Prop.\ 10.6.16]{Mo}).

\begin{thm} [Majid \cite{majid}]\label{thm:majid}
Let $H$ be a finite-dimensional Hopf algebra.
The categories of  Yetter-Drinfeld $H$-modules and of
$D(H)$-modules may be identified:
A  Yetter-Drinfeld $H$-module $M$ is a $D(H)$-module via
\begin{equation}\label{action-dual-general}
  (f\ot a)\cdot m = \sum \langle f \mid  (a\cdot m)_{(1)}\rangle (a\cdot m)_{(0)}
\end{equation}
for all $f\in H^*$, $a\in H$, $m\in M$.
Conversely, a $D(H)$-module $M$ is a Yetter-Drinfeld $H$-module via the restriction
of $M$ to a left $H$-module and to a left $H^*$-module (equivalently,  a right $H$-comodule).
\end{thm}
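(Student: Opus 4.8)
The plan is to construct the two passages separately and then check that they are mutually inverse and respect morphisms. The essential preliminary, valid since $\dim H<\infty$, is the standard dictionary between right $H$-comodules and left $H^*$-modules: a coaction $\delta(m)=\sum m\_0\ot m\_1$ corresponds to the action $f\cdot m=\sum\langle f\mid m\_1\rangle\, m\_0$ for $f\in H^*$, and the comodule axioms for $\delta$ translate verbatim into the axioms for an $H^*$-module, the algebra structure on $H^*$ being the convolution product dual to $\Delta$. Under this dictionary the proposed formula \eqref{action-dual-general} simply reads $(f\ot a)\cdot m=f\cdot(a\cdot m)$: act first by $a\in H$ through the module structure, then by $f\in H^*$ through the comodule structure. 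I would also record that $D(H)$ is generated as an algebra by the two Hopf subalgebras $H^*\ot 1$ and $\varepsilon\ot H$, that $(f\ot 1)(\varepsilon\ot a)=f\ot a$, and that the single nontrivial ``straightening'' relation between them is
\begin{equation*}
(\varepsilon\ot a)(f\ot 1)=\sum\, \big(a\_1\rightharpoonup f\leftharpoonup S^{-1}(a\_3)\big)\ot a\_2 .
\end{equation*}

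For the forward direction I would show that \eqref{action-dual-general} defines an associative unital $D(H)$-action, i.e.\ that $\big((f\ot a)(f'\ot b)\big)\cdot m=(f\ot a)\cdot\big((f'\ot b)\cdot m\big)$. Because $D(H)$ is generated by the two subalgebras above, and on each the action reduces to the given $H$- and $H^*$-actions (which are genuine actions by the comodule/module axioms), it suffices to verify compatibility with the straightening relation alone. Applying both sides of that relation to $m$ gives $a\cdot(f\cdot m)$ on the left and $\sum\big(a\_1\rightharpoonup f\leftharpoonup S^{-1}(a\_3)\big)\cdot(a\_2\cdot m)$ on the right; using the defining identities $\langle x\rightharpoonup f\mid y\rangle=\langle f\mid yx\rangle$ and $\langle f\leftharpoonup x\mid y\rangle=\langle f\mid xy\rangle$ to transport the $\rightharpoonup,\leftharpoonup$ actions onto the comodule coordinate, this equality becomes precisely the left-right Yetter-Drinfeld compatibility condition. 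Hence the formula is a $D(H)$-action exactly when $M$ is a Yetter-Drinfeld module.

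For the converse, given a $D(H)$-module $M$ I would restrict along the inclusions $\varepsilon\ot H$ and $H^*\ot 1$ to obtain a left $H$-module and a left $H^*$-module, the latter encoding a right $H$-comodule $\delta$ by the dictionary. The straightening relation now holds identically in $D(H)$, hence both of its sides act in the same way on $M$; reading the computation of the previous paragraph in reverse shows that this forces the Yetter-Drinfeld condition, so $M\in$\YD. Finally the two assignments are mutually inverse: \eqref{action-dual-general} recovers the restricted $H$- and $H^*$-actions upon setting $a=1$ or $f=\varepsilon$, and conversely the restricted actions reassemble into \eqref{action-dual-general} via $(f\ot a)=(f\ot 1)(\varepsilon\ot a)$. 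Since a $\k$-linear map is $D(H)$-linear if and only if it is simultaneously $H$-linear and $H^*$-linear (again because these generate $D(H)$), morphisms correspond as well, giving the asserted identification of categories.

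The step I expect to be the main obstacle is the verification in the forward direction: keeping the Sweedler indices straight through the threefold coproduct $a\_1,a\_2,a\_3$ and the inverse antipode $S^{-1}(a\_3)$, and recognizing that after moving the $\rightharpoonup$ and $\leftharpoonup$ actions to the comodule side one obtains exactly the identity $\sum a\_1\cdot m\_0\ot a\_2 m\_1=\sum(a\_2\cdot m)\_0\ot(a\_2\cdot m)\_1 a\_1$. Everything else is a formal consequence of the comodule/module dictionary together with the fact that $D(H)$ is generated by its two Hopf subalgebras.
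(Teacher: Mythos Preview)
The paper does not give its own proof of this theorem: it is quoted as a known result, with references to Majid \cite{majid} and Montgomery \cite[Prop.\ 10.6.16]{Mo}, and no argument is supplied beyond the statement. Your outline is the standard proof one finds in those references and is correct: reduce to the comodule/$H^*$-module dictionary, observe that $D(H)$ is generated by its two Hopf subalgebras with the single straightening relation, and check that compatibility with that relation is exactly the Yetter-Drinfeld condition. The one place to be careful is the step you flagged: when you compare $a\cdot(f\cdot m)$ with $\sum\big(a\_1\rightharpoonup f\leftharpoonup S^{-1}(a\_3)\big)\cdot(a\_2\cdot m)$ and strip off $f$, you obtain $\sum a\cdot m\_0\ot m\_1=\sum (a\_2\cdot m)\_0\ot S^{-1}(a\_3)(a\_2\cdot m)\_1 a\_1$, which is equivalent to the form of the compatibility condition stated in the paper only after an antipode manipulation (multiply the second tensorand on the left by $a\_{?}$ and use $\sum a\_1 S^{-1}(a\_2)=\varepsilon(a)$); this is routine but should be made explicit.
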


\noindent {\bf Radford's construction}\medskip

Radford   \cite{radford} gave a construction of all simple
Yetter-Drinfeld modules for certain graded Hopf algebras.
Although the results in  \cite{radford}  are more general, we will state them
only under the assumption that  $\k$ is an algebraically closed field of
characteristic 0.

\begin{lemma}[Radford~{\cite[Lem.\ 2]{radford}}]\label{lemma-beta-action}
Let $H$ be a bialgebra over an algebraically closed field  $\k$ of characteristic 0, and suppose $H^{\op}$ is a Hopf algebra
with antipode $S^{\op}$.
If $\beta \in \mbox{{\rm{Alg}}}_\k(H,\k)$, then $H_{\beta}= (H, \db, \Delta) \in  {}_H\mathcal {YD}^H$, where for all $x,a$ in $H$,

\begin{equation}\label{beta-action}
 x\db a =\sum \beta(x\_2)x\_3a S^{\op}(x\_1).
\end{equation}
If $g$ belongs to the grouplike elements $G(H)$ of $H$, then $H \db g$ is a Yetter-Drinfeld submodule of $H_{\beta}$.
\end{lemma}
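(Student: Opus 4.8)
The plan is to verify in turn the three assertions: that $\db$ makes $H$ into a left $H$-module, that $(H,\db,\Delta)$ satisfies the Yetter-Drinfeld compatibility condition (the coaction being $\Delta$ itself, which is automatically a coassociative, counital right $H$-comodule structure), and finally that $H\db g$ is a subobject. Throughout I would lean on two facts: that $\beta$ is an algebra map, so $\beta(1)=1$ and $\beta(xy)=\beta(x)\beta(y)$; and that $S^{\op}$ is the antipode of the Hopf algebra $H^{\op}$, which supplies the anti-multiplicativity $S^{\op}(xy)=S^{\op}(y)S^{\op}(x)$, the coalgebra anti-homomorphism identity $\Delta(S^{\op}(h))=\sum S^{\op}(h\_2)\ot S^{\op}(h\_1)$, and the antipode axioms $\sum S^{\op}(h\_2)h\_1=\varepsilon(h)1=\sum h\_2 S^{\op}(h\_1)$.

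For the module axioms, unitality $1\db a=a$ is immediate from $\Delta(1)=1\ot 1$, $\beta(1)=1$, and $S^{\op}(1)=1$. For associativity I would substitute the defining formula twice into $x\db(y\db a)$; the scalar factors $\beta(y\_2)\beta(x\_2)$ recombine via multiplicativity of $\beta$ and the factors $S^{\op}(y\_1)S^{\op}(x\_1)$ recombine via anti-multiplicativity of $S^{\op}$, so that after using coassociativity and the fact that $\Delta$ is an algebra map these collapse to $\beta((xy)\_2)$ and $S^{\op}((xy)\_1)$, identifying the result with $(xy)\db a$.

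The heart of the proof is the compatibility condition. I would first establish a formula for $\Delta(b\db m)$ for general $b,m$: applying the algebra map $\Delta$ to $\beta(b\_2)\,b\_3\,m\,S^{\op}(b\_1)$ and inserting $\Delta(S^{\op}(b\_1))=\sum S^{\op}(b\_2)\ot S^{\op}(b\_1)$ yields, after bookkeeping with coassociativity, $\Delta(b\db m)=\sum \beta(b\_3)\,b\_4 m\_1 S^{\op}(b\_2)\ot b\_5 m\_2 S^{\op}(b\_1)$, where $m\_1\ot m\_2=\Delta(m)$. Setting $b=a\_2$ and multiplying the right tensor leg on the right by $a\_1$, as demanded by the right-hand side of the compatibility condition, produces the factor $S^{\op}(a\_2)a\_1$ on two adjacent Sweedler legs; collapsing it through the antipode axiom $\sum S^{\op}(a\_2)a\_1=\varepsilon(a\_1)1$ telescopes the whole expression down to $\sum \beta(a\_2)a\_3 m\_1 S^{\op}(a\_1)\ot a\_4 m\_2$, which is exactly the left-hand side $\sum a\_1\db m\_1\ot a\_2 m\_2$. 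I expect this telescoping to be the main obstacle: one must keep several Sweedler legs straight, and must use the antipode relation for $S^{\op}$ rather than for $S$, in the correct order $S^{\op}(\,\cdot\,)(\,\cdot\,)$.

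Finally, for the submodule claim I would observe that $H\db g$ is stable under $\db$ by the associativity already proved, so it remains only to check stability under the coaction $\Delta$. Specializing the formula for $\Delta(b\db m)$ to $m=g$ grouplike, so that $g\_1=g\_2=g$, and regrouping gives $\Delta(x\db g)=\sum (x\_2\db g)\ot x\_3\, g\, S^{\op}(x\_1)$, whose left tensor factors lie in $H\db g$. Hence $\Delta(H\db g)\subseteq (H\db g)\ot H$, so $H\db g$ is a Yetter-Drinfeld submodule of $H_\beta$.
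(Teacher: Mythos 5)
Your proof is correct. Note that the paper itself gives no proof of this lemma --- it is quoted verbatim from Radford's paper \cite[Lem.\ 2]{radford} --- so there is no in-paper argument to compare against; a direct verification like yours is exactly what is called for. All three stages check out: the module axioms follow as you say from multiplicativity of $\beta$, anti-multiplicativity of $S^{\op}$ on $H$ (which does follow from $S^{\op}$ being the antipode of $H^{\op}$), and the fact that $\Delta$ is an algebra map; your intermediate formula
$\Delta(b\db m)=\sum \beta(b\_3)\,b\_4 m\_1 S^{\op}(b\_2)\ot b\_5 m\_2 S^{\op}(b\_1)$
is right, and feeding it into the right-hand side of the compatibility condition with $b=a\_2$ produces the adjacent factor $S^{\op}(a\_2)a\_1$, which collapses by the correct $H^{\op}$-antipode identity (strictly $\sum S^{\op}(a\_2)a\_1=\varepsilon(a)1$; your writing $\varepsilon(a\_1)1$ inside the longer Sweedler string is the usual harmless relabeling) to give $\sum \beta(a\_2)a\_3 m\_1 S^{\op}(a\_1)\ot a\_4 m\_2$, which is the left-hand side. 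You also correctly identified the one subtle point: since $H$ itself is only assumed to be a bialgebra, every antipode manipulation must go through $S^{\op}$ and the opposite-order identities, and you use them in the right order throughout. Finally, the submodule claim is handled cleanly: stability under the action is associativity, and your specialization $\Delta(x\db g)=\sum (x\_2\db g)\ot x\_3\, g\, S^{\op}(x\_1)$ shows stability under the coaction.
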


Let  $H= \bigoplus_{n=0}^{\infty}H_{n}$ be a graded Hopf algebra with $H_0=\k G$ a group algebra, and $H_n=H_{n+1} = \cdots = (0)$ for some $n > 0$.   An algebra map $\beta : H \to \k$ is determined by its restriction to $G$, which is an element in the dual group $\widehat{G} = \Hom(G, \k^{\times})$ of all group homomorphisms from $G$ to the multiplicative group $\k ^{\times}$.
In particular, $\beta(H_n) = 0$ for all $n \geq 1$.

\begin{thm}[Radford {\cite[Cor.~1] {radford}}]  \label{radford}
Let $H= \bigoplus_{n=0}^{\infty}H_{n}$ be a graded Hopf algebra over an algebraically closed field $\k$ of characteristic 0.
Suppose that $H_0=\k G$ for some finite abelian group $G$ and $H_n=H_{n+1} = \cdots = (0)$ for some $n > 0$. Then
$$(\beta, g) \mapsto [H\db g]$$
is a bijective correspondence between the Cartesian product of sets
$\widehat{G} \times G $  and the set of isomorphism classes of simple Yetter-Drinfeld $H$-modules.
\end{thm}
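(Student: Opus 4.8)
The plan is to build on Lemma~\ref{lemma-beta-action}, which already tells us that for each $\beta \in \widehat{G}$ and each $g \in G$, the space $H \db g$ is a Yetter-Drinfeld submodule of $H_\beta$. So the construction map $(\beta, g) \mapsto [H \db g]$ is well-defined into isomorphism classes of Yetter-Drinfeld modules; the content of the theorem is that each $H \db g$ is \emph{simple}, and that the assignment is a \emph{bijection} onto the set of all simple classes. Since Radford's paper establishes this, I would not reprove the simplicity and bijectivity from scratch but rather indicate the structural reasons, organized around the grading.

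First I would pin down the comodule structure and use the grading to locate a distinguished generator. Under the action \eqref{beta-action}, the element $g \in G(H) \subseteq H_0$ satisfies $x \db g = \sum \beta(x_{(2)}) x_{(3)} g\, S^{\op}(x_{(1)})$, and the $H$-coaction on $H_\beta$ is the coproduct $\Delta$. The key observation is that the top-degree component behaves predictably: because $\beta$ annihilates $H_n$ for $n \geq 1$ and $S^{\op}$ preserves the grading, acting by homogeneous elements of positive degree raises degree in a controlled way, while the grouplikes in $H_0$ act by the (twisted) adjoint action and scale $g$ by a character. This lets me identify $H \db g$ with a cyclic module generated by $g$, whose lowest-degree piece is one-dimensional and carries the $G$-weight determined by $\beta$ and $g$.

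The core step is simplicity together with the separation of isomorphism classes. For simplicity I would argue that any nonzero Yetter-Drinfeld submodule $N \subseteq H \db g$ must, by the grading and the abelian group action diagonalizing $H_0 = \k G$, contain a nonzero homogeneous $G$-eigenvector; descending via the action of $H_0$ and the comodule structure, such an eigenvector must generate the distinguished lowest-degree line $\k g$, so $N = H \db g$. To see that distinct pairs give non-isomorphic modules, I would read off two invariants from any simple module $M$: the $G$-weight of its (unique up to scalar) lowest-degree eigenvector recovers a character, while the $H$-coaction on that eigenvector recovers the grouplike $g$ via $\delta(g) = g \ot g$. These two invariants are preserved by Yetter-Drinfeld isomorphisms, so they separate the classes $[H \db g]$ and pin down $(\beta, g)$.

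Finally I would establish surjectivity, i.e.\ that \emph{every} simple Yetter-Drinfeld module arises this way. Here finiteness and the abelian structure are essential: since $G$ is finite abelian and $\k$ is algebraically closed of characteristic $0$, $\k G$ is semisimple and any Yetter-Drinfeld module decomposes into $G$-eigenspaces; the comodule counit forces a nonzero homogeneous eigenvector in any simple $M$, and matching its weight and comodule data to a pair $(\beta, g)$ produces a nonzero Yetter-Drinfeld map $H \db g \to M$, which is an isomorphism by simplicity of both sides. The main obstacle I anticipate is verifying that the candidate lowest-degree eigenvector is genuinely canonical — that the two invariants (the character and the grouplike) are well-defined independent of choices and are respected by morphisms — since this is exactly where the Yetter-Drinfeld compatibility condition, rather than mere module or comodule structure alone, must be invoked.
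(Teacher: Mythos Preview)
The paper does not give its own proof of this statement: Theorem~\ref{radford} is simply quoted from Radford~\cite[Cor.~1]{radford} as background, with no argument supplied. There is therefore nothing in the paper to compare your proposal against.

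That said, your sketch is a reasonable outline of the ideas behind Radford's result, with one caveat worth flagging. You repeatedly invoke a ``lowest-degree'' eigenvector in an arbitrary simple Yetter-Drinfeld module $M$, but $M$ is not a priori graded; the grading lives on $H$, not on $M$. What one actually uses is the $H$-comodule structure: since $H$ is graded and pointed with coradical $\k G$, the coradical filtration of $M$ as an $H$-comodule has socle $M_0 = \{m : \delta(m) \in M \otimes \k G\}$, and it is on this socle that one finds a $G$-eigenvector $m$ with $\delta(m) = m \otimes g$ for some $g \in G$. Your argument goes through once ``lowest degree'' is replaced by ``in the comodule socle,'' but as written the grading language is imprecise and could mislead a reader into thinking $M$ inherits a $\Z$-grading it need not have.
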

Note that when $G$ is abelian, then for all $g,h\in G$,
$$h\db g =  \beta(h)hgh^{-1} = \beta(h)g.$$

\section{Cocycle twists}\label{sec:twists}

In this section, we first collect definitions and known results about cocycle twists.
Then we apply these results in the context of Radford's construction to obtain
in Theorem \ref{modules-twist} below an explicit correspondence of Yetter-Drinfeld modules
under twists.

\begin{defns}{\em \begin{itemize}\item [{\rm (a)}]
The {\em convolution product} of two $\k$-linear maps \newline  $\sigma, \sigma':
H\ot H\rightarrow \k$ is defined by
$$
  \sigma\sigma' (a\ot b) = \sum \sigma(a_{(1)}\ot b_{(1)}) \sigma'(a_{(2)}\ot b_{(2)})$$
for all $a,b\in H$.

\item [{\rm (b)}]   A $\k$-linear map
$\sigma: H\otimes H\to \k$ on a Hopf algebra $H$ is {\em convolution
invertible} if there exists a $\k$-linear map $\sigma^{-1}:H\otimes H\to \k$ such that
$$\qquad \quad
   \sum \sigma(a\_1\ot b\_1)\sigma^{-1}(a\_2\ot b\_2) = \varepsilon(ab) =  \sum \sigma^{-1}(a\_1\ot b\_1)\sigma(a\_2\ot b\_2) .
$$

\item [{\rm (c)}]  A {\em 2-cocycle} on $H$ is a convolution invertible
$\k$-linear map $\sigma \colon H \otimes H \to \k$ satisfying
$$\sum \sigma(a\_1 \otimes b\_1) \sigma(a\_2 b\_2 \otimes c) = \sum \sigma(b\_1 \otimes c\_1) \sigma(a \otimes b\_2c\_2)$$
and $\sigma(a\otimes 1) = \sigma (1 \otimes a) = \varepsilon(a)$, for all $a, \, b, \, c \, \in H$.
\item [{\rm (d)}]  Given a 2-cocycle $\sigma$ on $H$,  the {\em cocycle twist} $H^\sigma$  of $H$ by $\sigma$ has $H^\sigma = H$ as a coalgebra,  and  the algebra structure on $H^\sigma$  is given by

\begin{equation}\label{eq;algtwist}  a \cdot_{\sigma} b = \sum \sigma(a\_1 \otimes b\_1)a\_2 b\_2 \sigma^{-1}(a\_3 \otimes b\_3) \end{equation}
for all $a,b\in H^{\sigma}$.
The antipode of $H^{\sigma}$ is
\begin{equation}\label{eq;antitwist} S^{\sigma}(a) = \sum \sigma\left(a\_1 \otimes S(a\_2)\right) S(a\_3)\sigma^{-1}\left(S(a\_4) \otimes a\_5\right),
\end{equation}
where $S$ is the antipode of $H$.
\end{itemize}}
\end{defns}

Recall that similarly a {\em 2-cocycle} on a group $G$
is a map $\sigma \colon G \times G \to \k^{\times}$ such that for all $g,h,k\in G$,
\begin{equation}\label{eq;groupco}  \sigma(g,h) \sigma(gh, k) =  \sigma(h, k) \sigma(g , hk).\end{equation}
A 2-cocycle $\sigma$ on $G$
is a {\em 2-coboundary} if there exists a function $\gamma: G\rightarrow \k^{\times}$
such that $\sigma=d\gamma$ where for all $g,h\in G$,
\begin{equation*}
   d\gamma(g,h) :=  \gamma(g)\gamma(h)\gamma(gh)^{-1}.
\end{equation*}
The cocycle $\sigma$ is said to be {\em normalized} if $\sigma(g,1) = 1 = \sigma(1,g)$ for all $ g \in G$.

The set of all (normalized) 2-cocycles on $G$ forms an abelian group under
pointwise multiplication, and the 2-coboundaries form a subgroup.
The quotient group of 2-cocycles modulo 2-coboundaries is denoted
$\coh^2(G,\k^{\times})$ and is known as the {\em Schur multiplier} of $G$.

If $G$ is abelian,
$\beta: G\rightarrow \k^{\times}$ is a group homomorphism, and $\sigma$ is
a 2-cocycle on $G$, then the function
$\beta_{ g, \sigma} : G \to \k^{\times}$ given by
\begin{equation}\label{eq;betagh}\beta_{g, \sigma} (h) = \beta(h) \sigma(g,h) \sigma^{-1}(h,g)
\end{equation}
is also a group homomorphism for each $g \in G$; this may be verified directly
from the cocycle identity, and it follows as well from the general theory below.

Assume ${ H= \bigoplus_{n=0}^{\infty}H_n}$ is a graded Hopf algebra with $H_0= \k G$, where $G$ is a finite abelian group.
A normalized 2-cocycle $\sigma \colon  G \times G \to \k^{\times}$
extends linearly to a Hopf algebra 2-cocycle on the group algebra $\k G$ (we abuse notation and call this extension $\sigma$ also),
where  $\sigma(g\otimes h) = \sigma(g,h)$ for $g,\, h \in G$.
Let $\pi \colon H \to \k G$ be the projection onto $H_0$,  and let $\sigma_{\pi} \colon H \otimes H \to \k$ be defined by $\sigma_{\pi}= \sigma \circ (\pi \otimes \pi)$.
Then $\sigma_{\pi}$ is a Hopf algebra 2-cocycle on $H$
(see for example \cite{andr-schneider}). The convolution inverse of $\sigma_{\pi}$ is
$\sigma_{\pi}^{-1}=(\sigma^{-1})_{\pi}$ where $\sigma^{-1} \colon G
\times G \to \k^{\times}$ is given by $\sigma^{-1}(g,h) =  \sigma(g,h)^{-1}$ for
all $g,h\in G$.
By abuse of notation, we also denote $\sigma_{\pi}$ by $\sigma$
for simplicity.

The resulting cocycle twist  $H^{\sigma}$ is a graded Hopf algebra with the same grading as $H$,  and $\left(H^{\sigma}\right)_0 = \k G $ as a Hopf algebra.
Let  $\sigma '$ be another 2-cocycle on $G$ extended to $H$.  Since
$\sigma$ and  $\sigma '$ factor through the projection $\pi$
to $\k G$ (a cocommutative Hopf algebra), the convolution product $\sigma \sigma '$
   is again a 2-cocycle on $H$.   Suppose now that $d\gamma$
   is a 2-coboundary coming from the map $\gamma: G \rightarrow
   \k^\times$.  Then
$H^{(d\gamma)(\sigma)}\cong H^{\sigma}$ as Hopf algebras:
One may check that the map
$\psi: H^{(d\gamma)(\sigma)}\rightarrow H^{\sigma}$ defined by
$$
  \psi(a) = \sum \gamma(a_{(1)}) a_{(2)} \gamma(a_{(3)})^{-1}  \quad \hbox{\rm for all} \ a \in H,
$$
is an isomorphism with inverse given by
 $\psi^{-1}(a) = \sum \gamma(a_{(1)})^{-1} a_{(2)} \gamma(a_{(3)})$, where
 we assume $\gamma$ has been extended to $H$ by setting  $\gamma(H_n) = 0$ for all $n \neq 0$ and by letting $\gamma$ act linearly on $H_0 = \k G$
(compare  \cite{doi} or \cite[Thm.\ 7.3.4]{Mo}).

\begin{definition} {\em  If $M$ is  Yetter-Drinfeld $H$-module and
$\sigma$ is a 2-cocycle  of $H$, there is a corresponding Yetter-Drinfeld $H^{\sigma}$-module,
denoted $M^\sigma$, defined as follows:
It is $M$ as a comodule, and the $H^{\sigma}$-action is given by
$$a \cdot^{\sigma} m = \sum \sigma \left( (a\_2 \cdot m\_0)\_1 \otimes a\_1 \right)(a\_2 \cdot m\_0)\_0\sigma^{-1}(a\_3 \otimes m\_1),$$
for all $a\in H$, $m\in M$ (see \cite[(13)]{chen-zhang}).
}\end{definition}

We will use the following equivalence of categories of Yetter-Drinfeld modules.
In general, if $\sigma$ is a 2-cocycle on $H$, then $\sigma^{-1}$ is a 2-cocycle on
$H^{\sigma}$; this follows from \cite[(10)]{chen-zhang}, using the definition
of the multiplication on $H^{\sigma}$ in \eqref{eq;algtwist}.
(In the special case that $\sigma$ is induced from the group of grouplike elements
of $H$ as described above, which is the only case we consider in this paper,
$\sigma^{-1}$ is a 2-cocycle on $H$ as well.)
The following result is due originally to Majid and Oeckl \cite[Thm.~2.7]{majid-oeckl};
the formulation of it for left-right Yetter-Drinfeld modules can be found in 
Chen and Zhang \cite[Cor.~2.7]{chen-zhang}.

\begin{thm}
\label{CZ}
Let  $\sigma$ be a 2-cocycle on the Hopf algebra $H$. Then the categories ${}_{H}\mathcal{YD}^{^{H}}$
and ${}_{H^{\sigma}}\mathcal{YD}^{^{H^\sigma}}$ are monoidally equivalent under the
functor
\begin{equation}\label{functor}F_\sigma \colon {}_{H}\mathcal{YD}^{^{H}} \to {}_{H^{\sigma}}\mathcal{YD}^{^{H^\sigma}}\end{equation}
which is the identity on homomorphisms,  and on the objects is given by $F_\sigma(M)= M^\sigma$.
The inverse functor is given by $N \mapsto N^{\sigma^{-1}}$.
\end{thm}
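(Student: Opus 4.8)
I want to prove that $F_\sigma$ is a monoidal equivalence by exhibiting an explicit quasi-inverse together with the natural isomorphisms witnessing the equivalence, rather than verifying an abstract list of axioms from scratch. The statement of Theorem~\ref{CZ} already tells me what the candidate quasi-inverse should be: since $\sigma^{-1}$ is a 2-cocycle on $H^\sigma$ and $(H^\sigma)^{\sigma^{-1}} = H$ (the multiplication in \eqref{eq;algtwist} applied twice with $\sigma$ and then $\sigma^{-1}$ returns the original product, because $\sigma$ factors through the group of grouplikes and so the convolution inverses cancel), the functor $G_{\sigma^{-1}}\colon N \mapsto N^{\sigma^{-1}}$ maps ${}_{H^\sigma}\mathcal{YD}^{H^\sigma}$ back to ${}_H\mathcal{YD}^H$. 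So the whole argument reduces to three checks.

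\emph{Step 1: $F_\sigma(M)$ really is a Yetter-Drinfeld $H^\sigma$-module.} Given $M\in {}_H\mathcal{YD}^H$, I take $M^\sigma$ to be $M$ as a comodule with the twisted action $\cdot^\sigma$ from the Definition preceding the theorem. I would verify that $\cdot^\sigma$ is an associative, unital $H^\sigma$-action and that the left-right Yetter-Drinfeld compatibility condition holds for $(M^\sigma, \cdot^\sigma, \delta)$. Both are direct Sweedler-notation computations: associativity uses the 2-cocycle identity from Definition~(c) to reassociate the $\sigma$-factors, and the compatibility condition uses the original Yetter-Drinfeld condition for $M$ together with the cocycle identity. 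This is exactly the content imported from \cite[Cor.~2.7]{chen-zhang}, so I would cite that reference for the detailed verification rather than reproduce it.

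\emph{Step 2: $F_\sigma$ and $G_{\sigma^{-1}}$ are mutually inverse on objects and are the identity on morphisms.} Since both functors are the identity on underlying comodules and on $\k$-linear maps, $F_\sigma$ and $G_{\sigma^{-1}}$ are automatically full, faithful, and the identity on homomorphism spaces; the only real claim is $G_{\sigma^{-1}}\circ F_\sigma = \mathrm{id}$ and $F_\sigma\circ G_{\sigma^{-1}} = \mathrm{id}$ on objects. Here I would write out the doubly-twisted action: applying $\cdot^{\sigma^{-1}}$ (using the cocycle $\sigma^{-1}$ on $H^\sigma$) to the action $\cdot^\sigma$ and show, again by the cocycle identity together with convolution-invertibility $\sigma\sigma^{-1}=\varepsilon\circ(\text{mult})$, that all the $\sigma$ and $\sigma^{-1}$ factors telescope and one recovers the original action $\cdot$ on $M$. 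Because the comodule structure is untouched throughout, equality of actions gives equality of objects on the nose, so these composites are literally the identity functors and no mediating natural isomorphism is even needed.

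\emph{Step 3: the monoidal structure.} This is where I expect the main work to lie. I must supply natural isomorphisms $(M\ot N)^\sigma \cong M^\sigma \ot N^\sigma$ in ${}_{H^\sigma}\mathcal{YD}^{H^\sigma}$ compatible with the associativity and unit constraints, where the tensor product on each side is the braided tensor product of Yetter-Drinfeld modules and the $H^\sigma$-action on $M^\sigma\ot N^\sigma$ uses the twisted coproduct-which, crucially, is the \emph{same} as the coproduct of $H$ since $H^\sigma = H$ as a coalgebra. The natural transformation itself is built from $\sigma$ applied to the relevant comodule legs; I would write this map explicitly and then check it intertwines the two $H^\sigma$-actions and the two comodule structures, and that it satisfies the hexagon/pentagon coherence making $F_\sigma$ monoidal. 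The verification that this map is an $H^\sigma$-module homomorphism is the most delicate computation, since it couples the twisted action formula with the cocycle identity on three tensor factors; this is precisely the monoidal part of Majid and Oeckl's theorem \cite[Thm.~2.7]{majid-oeckl}, and I would structure the argument to reduce to their result, checking only that the left-right conventions of \cite{chen-zhang} match ours. Having assembled the quasi-inverse, the monoidal natural isomorphisms, and the coherence, the equivalence of categories follows.
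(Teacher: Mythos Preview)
The paper does not give its own proof of this theorem. It states the result and attributes it to Majid--Oeckl \cite[Thm.~2.7]{majid-oeckl}, with the left-right formulation taken from Chen--Zhang \cite[Cor.~2.7]{chen-zhang}; no argument is supplied beyond those citations. Your proposal is therefore not really comparable to a proof in the paper---you are sketching the content of those references, which the authors simply invoke. In that sense you and the paper are aligned: both defer to the literature, and you cite the same two sources at the appropriate places.

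One small correction to your sketch: in Step~2 you justify $(H^\sigma)^{\sigma^{-1}} = H$ by saying ``$\sigma$ factors through the group of grouplikes and so the convolution inverses cancel.'' That is not the right reason in the generality of the theorem as stated, which applies to an arbitrary Hopf algebra 2-cocycle $\sigma$, not just one induced from $G(H)$. The identity $(H^\sigma)^{\sigma^{-1}} = H$ holds in general and follows directly from writing out the iterated twisted product and using convolution invertibility together with the cocycle condition; no appeal to grouplikes is needed. (The paper itself flags this distinction in the paragraph just before the theorem: $\sigma^{-1}$ is a cocycle on $H^\sigma$ in general, and is also a cocycle on $H$ in the grouplike-induced case that the paper later restricts to.) With that adjustment your outline is a faithful summary of what the cited references do.
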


In particular, we note that by the definition of the modules $M^{\sigma}$,
the category equivalence in the theorem preserves dimensions of modules.

We now apply Theorem \ref{radford} to obtain both
Yetter-Drinfeld $H$-modules and Yetter-Drinfeld $H^{\sigma}$-modules, under appropriate hypotheses on $H$.
The next theorem gives an explicit description of  the correspondence
of their simple Yetter-Drinfeld modules.

\begin{thm}\label{modules-twist}
Let $H= \bigoplus_{n=0}^{\infty}H_{n}$ be a graded Hopf algebra over an algebraically
closed field $\k$ of characteristic 0 for which
$H_0=\k G$,  $G$ is a finite abelian group, and $H_n=H_{n+1} = \cdots = (0)$ for some $n > 0$.
Let $\sigma \colon G \times G \to \k^{\times}$ be a normalized 2-cocycle
 on $G$, extended to a 2-cocycle on $H$.
For each pair $(\beta, g) \in \widehat{G} \times G$, we have
$$\left(H\db g\right)^{\sigma} \cong H^{\sigma} \DOT_{\beta_{g,\sigma}} g,$$
where $\beta_{g,\sigma}$ is defined in \eqref{eq;betagh}.  
The isomorphism maps $h\db^{\sigma} g$ to $h\DOT_{\beta_{g,\sigma}} g$ for each $h$
in $H$.
\end{thm}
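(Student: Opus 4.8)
The plan is to show directly that the explicit $\k$-linear map
$$\Phi \colon \left(H\db g\right)^{\sigma} \to H^{\sigma} \DOT_{\beta_{g,\sigma}} g, \qquad \Phi(h\db^{\sigma} g) = h\DOT_{\beta_{g,\sigma}} g$$
is an isomorphism of Yetter-Drinfeld $H^{\sigma}$-modules. Since both sides are realized as subspaces of the \emph{same} underlying coalgebra $H = H^{\sigma}$ — namely $H\db g$ is a comodule via $\Delta$, and its twist $(H\db g)^{\sigma}$ is ``$M$ as a comodule'' by definition, while $H^{\sigma}\DOT_{\beta_{g,\sigma}} g$ is again a subcomodule of $(H^{\sigma})_{\beta_{g,\sigma}}$ under $\Delta$ — the map $\Phi$ is the identity on the underlying space $H\DOT g = H\db g$ (the reader should check these two subspaces of $H$ coincide, which follows because the right comodule structures agree and the action of a grouplike only rescales). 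The content is therefore entirely in verifying that $\Phi$ intertwines the two $H^{\sigma}$-actions: the twisted action $\cdot^{\sigma}$ on $(H\db g)^{\sigma}$ coming from the $\db$-action through the recipe in the Definition preceding Theorem \ref{CZ}, versus the $\DOT_{\beta_{g,\sigma}}$-action on $H^{\sigma}$ coming from Lemma \ref{lemma-beta-action} applied to $H^{\sigma}$ with the algebra map $\beta_{g,\sigma}$.

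**First** I would unwind both actions into Sweedler notation. For the left-hand action I substitute the $\db$-action \eqref{beta-action} into the twist formula, producing an expression in $\sigma$, $\sigma^{-1}$, $\beta$, the multiplication of $H$, and the antipode $S^{\op}$, all applied to iterated coproducts of $h$ and $g$. For the right-hand action I expand $x \DOT_{\beta_{g,\sigma}} a = \sum \beta_{g,\sigma}(x\_2)\, x\_3 \cdot_{\sigma} a \cdot_{\sigma} S^{\op}_{\sigma}(x\_1)$, where now the products are the twisted products \eqref{eq;algtwist} and $S^{\op}_{\sigma}$ is the twisted opposite antipode obtained from \eqref{eq;antitwist}. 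Because the element being acted on is the grouplike $g$, every coproduct $\Delta(g) = g\ot g$ collapses, and $\beta_{g,\sigma}$ unpacks via \eqref{eq;betagh} as $\beta(h)\sigma(g,h)\sigma^{-1}(h,g)$; this is exactly the place where the particular choice of twisted character $\beta_{g,\sigma}$ rather than $\beta$ is forced, and I expect the extra factors $\sigma(g,-)\sigma^{-1}(-,g)$ to be precisely what is needed to absorb the $\sigma$-factors generated by commuting $g$ past the twisted multiplications.

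**The main obstacle** will be the bookkeeping: matching the two fully-expanded expressions requires repeatedly applying the 2-cocycle identity (c) of the Definitions, the normalization $\sigma(a\ot 1)=\sigma(1\ot a)=\varepsilon(a)$, and the convolution-inverse relations, in order to cancel the $\sigma$-strings that arise from the twisted products and the twisted antipode against those coming directly from the $\cdot^{\sigma}$ recipe. A useful simplification I would exploit is that $\sigma$ factors through the projection $\pi$ onto $H_0 = \k G$, so $\sigma$ and $\sigma^{-1}$ only ``see'' the grouplike parts of the various coproduct components; combined with the fact that $g$ is grouplike and that $\beta$ vanishes on $H_n$ for $n\geq 1$, this should let me replace many of the $\sigma$-arguments by their group-algebra images and thereby reduce the cocycle manipulations to the group-level identity \eqref{eq;groupco} together with the definition \eqref{eq;betagh}.

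**Finally**, once the two actions are shown to agree under $\Phi$, it remains only to note that $\Phi$ is a bijection — immediate, since it is the identity on the common underlying space $H\db g = H^{\sigma}\DOT_{\beta_{g,\sigma}}g$ — and that it is a comodule map, which holds on the nose because both twisted and untwisted modules carry the \emph{same} comodule structure $\Delta$ and $\Phi$ does not alter the underlying element. Since Theorem \ref{radford} guarantees $H\db g$ is simple, Theorem \ref{CZ} guarantees $(H\db g)^{\sigma}$ is simple as well, so the verified intertwining property already upgrades $\Phi$ to the claimed isomorphism; alternatively, one could invoke Theorem \ref{CZ} abstractly to know \emph{some} isomorphism exists and use the explicit computation only to identify the target as $H^{\sigma}\DOT_{\beta_{g,\sigma}}g$ with the stated formula, which is the genuinely new information.
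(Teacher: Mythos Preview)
Your approach is correct in principle but takes a genuinely different route from the paper. You propose to verify \emph{directly} that $h\db^{\sigma} g = h\DOT_{\beta_{g,\sigma}} g$ as elements of $H$ for all $h\in H$, by fully expanding both sides in Sweedler notation and cancelling the $\sigma$-strings using the cocycle identity and the fact that $\sigma$ factors through $\pi$. The paper instead argues abstractly: since $(H\db g)^{\sigma}$ is simple (Theorem~\ref{CZ}) and $H^{\sigma}$ again satisfies the hypotheses of Theorem~\ref{radford}, the twist must be isomorphic to $H^{\sigma}\DOT_{\beta'}g'$ for a \emph{unique} pair $(\beta',g')$. One then identifies $g'=g$ using only that an isomorphism $\phi$ is a comodule map (so $\phi(g)$ is grouplike, hence a scalar multiple of some $g'$, forcing $g'=g$), and pins down $\beta'$ by computing $h\db^{\sigma}g$ \emph{only for $h\in G$}, where every coproduct collapses and the calculation is four lines. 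Your direct approach has the advantage of being self-contained and of giving the explicit formula for the isomorphism without appealing to simplicity or Radford's classification a second time; the paper's approach buys a dramatic reduction in bookkeeping by outsourcing the existence of the isomorphism to the classification and computing only on grouplikes.

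One expository wrinkle in your plan: asserting up front that $\Phi$ ``is the identity on the underlying space $H\DOT g = H\db g$'' is circular---that equality of subspaces is exactly what the computation $h\db^{\sigma}g = h\DOT_{\beta_{g,\sigma}}g$ establishes, and your parenthetical justification (comodule structures agree, grouplikes rescale) does not by itself show the two cyclic submodules of $H$ coincide. Cleaner would be to state the goal as proving this identity for all $h$, from which well-definedness, bijectivity, and the intertwining property of $\Phi$ all follow at once.
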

\begin{proof}
Since $H \db g$ is a simple Yetter-Drinfeld $H$-module by Theorem \ref{radford},  and the inverse of
the functor $F_{\sigma}$ of (\ref{functor})
is the identity on homomorphisms,  $\left( H \db g\right)^{\sigma}$ is a simple Yetter-Drinfeld $H^{\sigma }$-module.

Now $H^{\sigma }$ is also a graded Hopf algebra with $
\left(H^{\sigma }\right)_0= \k G$ and $\left(H^{\sigma }\right)_n = \left(H^{\sigma }\right)_{n+1} = \cdots =0$.  Thus,  isomorphism classes of simple Yetter-Drinfeld $H^{\sigma }$-modules are also in one-to-one correspondence with $\widehat{G} \times G$, and we have unique $\beta' \in \widehat{G}$ and $g' \in G$ such that
$$\left( H \db g \right)^{\sigma } \cong H^{\sigma } \DOT_{\beta'} g'.$$
Let $\phi \colon \left( H \db g \right)^{\sigma } \to  H^{\sigma } \DOT_{\beta'} g'$ be an isomorphism of Yetter-Drinfeld modules. Since $\phi$ is a comodule map,
$$(\phi \otimes \id)( \Delta(g)) = \Delta(\phi(g)).$$
Applying $\varepsilon \otimes \id$ on both sides, we obtain
$$\varepsilon(\phi(g)) g = \phi(g).$$
Because $\phi(g)\neq 0$, this equation implies that $\varepsilon (\phi(g))\neq 0$, so $g \in \Im(\phi)$. However, there is a unique grouplike element in $H^{\sigma } \DOT_{\beta'} g'$; hence $g=g'$.
By multiplying $\phi$
by $\varepsilon(\phi(g))^{-1}$ if necessary, we may assume $\phi(g)=g$.
Note that $\beta'$ is uniquely determined by its images $\beta'(h)$ for all $h\in G$. Let $h \in G$, then
\begin{eqnarray*}
\beta'(h) g &=& h \DOT_{\beta'} g \; = \; h \DOT_{\beta'} \phi(g) \; = \; \phi \left( h \DOT_{\beta}^{\sigma} g \right) \\
&=& \phi\left( \sum \sigma\left( (h\db g)\_1\ot h \right) (h\db g)\_0 \sigma^{-1}(h\ot g)\right) \\
&=& \phi \left( \sum \sigma\left((\beta(h)g)\_1\ot h \right)(\beta(h)g)\_0\sigma^{-1}(h\ot g)\right) \\
&=& \phi\left(\sigma(g,h)\beta(h)g\sigma^{-1}(h,g)\right)\\
&=& \beta(h)\sigma(g,h)\sigma^{-1}(h,g)g.
\end{eqnarray*}
Therefore $\beta'(h) = \beta(h)\sigma(g,h)\sigma^{-1}(h,g)=
\beta_{g,\sigma}(h)$, as desired.
\end{proof}

\section{Restricted quantum groups}\label{sec:rqg}

We will apply Theorems \ref{radford}, \ref{CZ}, and \ref{modules-twist}
to some restricted quantum groups.
We focus on the two-parameter quantum groups $\ursn$ in this paper.
The same techniques may be used more generally on finite-dimensional
two-parameter or multi-parameter quantum groups (such as those for example  in  \cite{hu-wang,towber-westreich}). \bigskip

\noindent {\bf The quantum groups $\ursn$}\medskip

  Let $\e_1, \ldots, \e_n$ be an orthonormal basis of  Euclidean
space ${\mathbb R}^n$ with inner product $\langle \ , \ \rangle$.
Let $\Phi =
\{\e_i -\e_j \mid 1 \leq i \neq j \leq n\}$ and
$\Pi =
\{\alpha_j = \e_{j}- \e_{j+1} \mid j = 1, \dots, n-1\}$.
Then $\Phi$ is a finite
root system of type A$_{n-1}$ with base $\Pi$  of simple roots.
Let $r,s\in \k ^{\times}$ be roots of unity with $r\neq s$ and let
$\ell$ be the least common multiple of the orders of $r$ and $s$.
Let $q$ be a primitive $\ell$th root
of unity and $y$ and $z$ be nonnegative integers such that $r=q^y$ and $s=q^z$.
The following Hopf algebra, which appeared in
\cite{{benkart-witherspoon}},  is a slight modification of one defined by Takeuchi \cite{takeuchi}.
\begin{definition}\label{quantum-group}{\em The algebra $U=U_{r,s}(\mathfrak{sl}_{n})$ is the unital associative
$\k$-algebra
generated by $
\{e_j, \ f_j, \ \w_j^{\pm 1}, \ (\w_j')^{\pm 1}, \ \ 1 \leq j < n \}$, subject
to the following relations.
\medbreak

 \begin{itemize}
\item[(R1)]  The $\w_i^{\pm 1}, \ (\w_j')^{\pm 1}$ all commute with one
another and \\
 $\w_i \w_i^{-1}= \w_j'(\w_j')^{-1}=1,$
\item[(R2)] $ \w_i e_j = r^{\langle \epsilon_i,\alpha_j\rangle }s^{\langle
\epsilon_{i+1},\alpha_j\rangle}e_j \w_i$ \ \ and \ \ $\w_if_j =
r^{-\langle\epsilon_i,\alpha_j\rangle}s^{-\langle\epsilon_{i+1},\alpha_j\rangle} f_j\w_i,$
\item[(R3)]
$\w_i'e_j = r^{\langle\epsilon_{i+1},\alpha_j\rangle}s^{\langle\epsilon_{i},\alpha_j\rangle}e_j
\w_i'$ \ \ and \ \ $\w_i'f_j =
r^{-\langle\epsilon_{i+1},\alpha_j\rangle}s^{-\langle\epsilon_{i},\alpha_j\rangle}f_j \w_i'$,
\item[(R4)]
$\displaystyle{[e_i,f_j]=\frac{\delta_{i,j}}{r-s}(\w_i-\w_i'),}$
\item[(R5)] $[e_i,e_j]=[f_i,f_j]=0 \ \ \text{ if }\ \ |i-j|>1, $
\item[(R6)]
 $e_i^2e_{i+1}-(r+s)e_ie_{i+1}e_i+rse_{i+1}e_i^2 = 0,$ \\
$e_i e^2_{i+1} -(r+s)e_{i+1}e_ie_{i+1} +rse^2_{i+1}e_i = 0,$
\item[(R7)]
 $f_i^2f_{i+1}-(r^{-1}+s^{-1})f_if_{i+1}f_i +r^{-1}s^{-1}f_{i+1}f_i^2 =
 0,$\\
$f_i f^2_{i+1} -(r^{-1}+s^{-1})f_{i+1}f_if_{i+1}+r^{-1}s^{-1} f^2_{i+1} f_i=0, $
\end{itemize}
for all $1\leq i,j <n.$
}\end{definition}

The following coproduct, counit, and antipode give $U$ the structure of a Hopf algebra:
\begin{eqnarray*}
\Delta(e_i)=e_i\otimes 1+\omega_i\otimes e_i, &\ \ & \Delta(f_i)=1\otimes f_i +
f_i\otimes\omega_i',\\
\varepsilon(e_i)=0, &\ \ & \varepsilon(f_i)=0,\\
S(e_i)=-\omega_i^{-1}e_i,&\ \ & S(f_i)=-f_i(\omega_i')^{-1},
\end{eqnarray*}
and $\omega_i,\omega_i'$ are grouplike, for all $1\leq i <n$.

Let $U^0$ be the group algebra generated by all $\omega_i^{\pm 1}$,
$(\omega_i')^{\pm 1}$ and let $U^{+}$ (respectively, $U^{-}$) be the
subalgebra of $U$ generated by all $e_i$ (respectively, $f_i$).

Let $$\E_{j,j}=e_j \ \ \mbox{ and } \ \ \E_{i,j}=e_i\E_{i-1,j}-
  r^{-1}\E_{i-1,j}e_i \quad  (i>j),$$
$$\F_{j,j}=f_j \ \ \mbox{ and } \ \ \F_{i,j}=f_i\F_{i-1,j}-
s\F_{i-1,j}f_i \quad (i>j).$$
The algebra $U$ has a triangular decomposition $U\cong U^-
\otimes U^0\otimes U^+$ (as vector spaces), and, as shown in
\cite{b-k-l,kharchenko},  the subalgebras $U^+$, $U^-$
have monomial Poincar\'e-Birkhoff-Witt (PBW) bases given respectively by
 \begin{equation*}
\mathcal E: = \{\E_{i_1,j_1}\E_{i_2,j_2}\cdots\E_{i_p,j_p}\mid (i_1,j_1)\leq
(i_2,j_2)\leq\cdots\leq (i_p,j_p)\mbox{ lexicographically}\},
\end{equation*}
\begin{equation*}
\mathcal F:= \{\F_{i_1,j_1}\F_{i_2,j_2}\cdots\F_{i_p,j_p}\mid (i_1,j_1)\leq
(i_2,j_2)\leq\cdots\leq (i_p,j_p)\mbox{ lexicographically}\}.
\end{equation*}

In \cite{benkart-witherspoon} it is proven that all
$\E^{\ell}_{i,j}$, $\F^{\ell}_{i,j}$, $\omega_i^{\ell}-1$, and
$(\omega'_i)^{\ell}-1$ ($1\leq j \leq i< n$) are central
in $U_{r,s}({\mathfrak{sl}}_{n})$.   The ideal $I_{n}$
generated by these elements is a Hopf ideal
\cite[Thm.\ 2.17]{benkart-witherspoon}, and so the quotient
\begin{equation}\label{resqg}{\mathfrak{u}}_{r,s}({\mathfrak{sl}}_{n})=
  U_{r,s}({\mathfrak{sl}}_{n})/I_{n}\end{equation}
is a Hopf algebra, called the {\em restricted two-parameter quantum group}.
Examination of the PBW bases $\mathcal E$ and $\mathcal F$ shows that ${\mathfrak{u}}_{r,s}({\mathfrak{sl}}_{n})$ is finite dimensional,  and in fact,  it
is a pointed Hopf algebra \cite[Prop.\ 3.2]{benkart-witherspoon}.
 \medbreak

Let $\mathcal E_\ell$ and $\F_{\ell}$ denote the sets of monomials in $\mathcal E$ and $\F$ respectively, in which each $\mathcal E_{i,j}$ or $\F_{i,j}$ appears as a factor at most $\ell-1$ times. Identifying cosets in $\mathfrak{u}_{r,s}({\mathfrak{sl}}_{n})$ with their
representatives, we may assume $\mathcal E_{\ell}$ and $\F_{\ell}$ are bases for the subalgebras of
${\mathfrak{u}}_{r,s}({\mathfrak{sl}}_{n})$ generated by the elements $e_i$ and $f_i$ respectively.

\begin{definition} {\em Let $\mathfrak b_{r,s}$ denote the Hopf subalgebra of ${\mathfrak{u}}_{r,s}(
{\mathfrak{sl}}_n)$ generated by all $\omega_i, e_i$,
and let ${\mathfrak{b}}'_{r,s}$ denote the Hopf subalgebra generated by all $\omega_i',\, f_i$ ($1\leq i <n$). }
\end{definition}

Under certain conditions on the parameters $r$ and $s$,  stated explicitly in the next
theorem,
$\ursn$ is  a Drinfeld double.
The statement of the result differs somewhat from  that given in
\cite{benkart-witherspoon} due to the use of a slightly
different definition of the Drinfeld double;   details on these differences can be found in
\cite{pereira2}.

\begin{thm}
[{\cite[Thm.~4.8] {benkart-witherspoon}}] \label{double}
Assume $r = q^y$ and $s = q^z$,
where $q$ is a primitive $\ell$th root of unity, and
\begin{equation}\label{gcd-condition} \gcd(y^{n-1}-y^{n-2}z+\cdots +(-1)^{n-1}z^{n-1},\ell)=1.
\end{equation}
Then there is an
isomorphism of Hopf algebras ${\mathfrak{u}}_{r,s}({\mathfrak{sl}}_{n})
\cong (D((\mathfrak{b}_{r,s}')^{\coop}))^{\coop}$.
\end{thm}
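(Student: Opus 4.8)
The plan is to exhibit $\ursn$ as its two Borel subalgebras $\mathfrak{b}_{r,s}$ and $\mathfrak{b}'_{r,s}$ glued together along a nondegenerate Hopf pairing, and then to recognize that gluing as the Drinfeld double construction of Section 2. First I would write down a skew-Hopf (bialgebra) pairing $\langle\,,\,\rangle \colon \mathfrak{b}_{r,s}\times\mathfrak{b}'_{r,s}\to\k$ on generators, dictated by relations (R2)--(R4): I would set $\langle\omega_i,\omega_j'\rangle$ equal to the scalar monomial in $r,s$ read off from (R2)--(R3), $\langle e_i,f_j\rangle$ proportional to $\delta_{ij}/(r-s)$ as forced by (R4), and all mixed pairings $\langle\omega_i,f_j\rangle$, $\langle e_i,\omega_j'\rangle$ given by the counit/grouplike values. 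I would then verify that these values extend consistently to a pairing of algebras, i.e. that they respect the commutativity relations (R5) and, crucially, the quantum Serre relations (R6)--(R7); this is the standard but lengthy induction, organized along the PBW bases $\mathcal E$ and $\mathcal F$ of $U^+$ and $U^-$.

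Next I would pass to the restricted quotients. One checks that the central elements generating the Hopf ideal $I_n$ of Definition \ref{quantum-group} --- the $\ell$-th powers $\E_{i,j}^{\ell}$, $\F_{i,j}^{\ell}$ together with $\omega_i^{\ell}-1$ and $(\omega_i')^{\ell}-1$ --- all lie in the radical of the pairing, so that $\langle\,,\,\rangle$ descends to a pairing of the finite-dimensional Hopf algebras $\mathfrak{b}_{r,s}$ and $\mathfrak{b}'_{r,s}$. Since these are finite dimensional of equal dimension, a nondegenerate pairing produces a Hopf isomorphism identifying $\mathfrak{b}_{r,s}$ with the dual of $(\mathfrak{b}'_{r,s})^{\coop}$ carrying the appropriate opposite structures, which is exactly the factor $(H^*)^{\coop}$ occurring in $D(H)$ for $H=(\mathfrak{b}'_{r,s})^{\coop}$.

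The decisive step, and the one I expect to be the main obstacle, is nondegeneracy, which is precisely where hypothesis \eqref{gcd-condition} enters. On the nilpotent parts $U^{\pm}$ nondegeneracy is routine: the pairing is essentially triangular on the PBW monomials, with diagonal entries built from $q$-factorials that are units at the primitive $\ell$-th root of unity $q$. The genuine constraint comes from the toral part $\k G$, $G=\langle\omega_i,\omega_i'\rangle$, where the pairing is $(\omega_i,\omega_j')\mapsto q^{M_{ij}}$ for an integer exponent matrix $M$ obtained from (R2)--(R3); the induced pairing of $G$ against the dual group is perfect over $\Z/\ell\Z$ if and only if $\det M$ is a unit mod $\ell$. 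I would compute $\det M$ and show it equals, up to sign, the quantity $y^{n-1}-y^{n-2}z+\cdots+(-1)^{n-1}z^{n-1}$ (it factors through the type $A_{n-1}$ Cartan data evaluated in $r,s$), so that the toral pairing is nondegenerate exactly when $\gcd$ of that integer with $\ell$ equals $1$.

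Finally I would assemble the isomorphism. The restricted triangular decomposition gives a dimension count $\dim\ursn=\ell^{\,n(n-1)+2(n-1)}=(\dim\mathfrak{b}'_{r,s})^2=\dim D((\mathfrak{b}'_{r,s})^{\coop})$, so the multiplication map $(\mathfrak{b}'_{r,s})^{\coop}\otimes\mathfrak{b}_{r,s}\to\ursn$ is a linear isomorphism. Under the identification $\mathfrak{b}_{r,s}\cong(H^*)^{\coop}$ from the pairing, I would check that this map intertwines the smash-product cross relations defining the double --- which reproduce precisely (R2), (R3), and the commutator (R4) --- with the relations of $\ursn$, and that it is simultaneously a coalgebra map, yielding the asserted Hopf isomorphism $\ursn\cong\bigl(D((\mathfrak{b}'_{r,s})^{\coop})\bigr)^{\coop}$. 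The only delicate bookkeeping is matching the opposite $(-)^{\op}$ and co-opposite $(-)^{\coop}$ twists so that the grouplikes and the $e_i,f_j$ cross relation land with the correct variance against the precise form of $D(-)$ fixed in Section 2; this is the source of the convention difference from \cite{benkart-witherspoon} noted before the statement.
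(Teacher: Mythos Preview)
The paper does not prove this theorem: it is quoted verbatim from \cite[Thm.~4.8]{benkart-witherspoon} and used as a black box, with the remark just before the statement noting that only the convention for $D(-)$ differs from the original. So there is no proof in the paper to compare your proposal against.

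That said, your outline is a faithful reconstruction of the argument in the cited reference. The proof there proceeds exactly by exhibiting a skew-Hopf pairing between $\mathfrak{b}_{r,s}$ and $\mathfrak{b}'_{r,s}$ determined on generators by (R2)--(R4), checking it descends to the restricted quotients, and showing that nondegeneracy is equivalent to the invertibility modulo $\ell$ of the determinant of the toral exponent matrix, which is precisely the alternating sum in \eqref{gcd-condition}. Your identification of the nondegeneracy step as the place where \eqref{gcd-condition} enters, and your dimension count $\dim\ursn=\ell^{n(n-1)+2(n-1)}=(\dim\mathfrak{b}'_{r,s})^2$, are both correct. The only point worth flagging is that the verification that the pairing respects the Serre relations (R6)--(R7) and that the $\ell$-th powers of the PBW generators lie in the radical is not as routine as you suggest; in \cite{benkart-witherspoon} it relies on explicit coproduct formulas for the $\E_{i,j}$ and $\F_{i,j}$ and on the $q$-binomial identities at an $\ell$-th root of unity, and would need to be spelled out rather than asserted.
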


For simplicity we set \begin{equation}\label{eq:hrsG} H_{r,s}:=(\mathfrak{b}_{r,s}')^{\coop} \ \ \hbox{\rm and} \ \
G = G(H_{r,s}) = \langle \w_i' \mid 1 \leq i < n\rangle.\end{equation}
Then $H_{r,s}$ is a graded Hopf algebra with $\w_i' \in (H_{r,s})_0$, and $f_i \in (H_{r,s})_1$ for all $1 \leq i <n$.
When the conditions of Theorem \ref{double} are satisfied,
we may apply Theorems \ref{thm:majid} and \ref{radford} to obtain
$\ursn$-modules as Yetter-Drinfeld $H_{r,s}$-modules via Radford's construction as follows.
Note that algebra maps from $H_{r,s}$ to $\k$ are the grouplike elements of $H_{r,s}^*$.

\begin{thm}
Assume (\ref{gcd-condition}) holds.
Isomorphism classes of simple $\ursn$-modules (equivalently, simple Yetter-Drinfeld $H_{r,s}$-modules) are in one-to-one correspondence with $G(H_{r,s}^*) \times G(H_{r,s})$.
\end{thm}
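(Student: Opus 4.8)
The plan is to read off the correspondence by composing the three structural results already in hand---Theorems \ref{double}, \ref{thm:majid}, and \ref{radford}---and then translating the resulting index set into the form stated.

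First I would invoke Theorem \ref{double}: under \eqref{gcd-condition} there is a Hopf algebra isomorphism $\ursn \cong (D(H_{r,s}))^{\coop}$. Since a module for a Hopf algebra is just a module for its underlying algebra, and passing to the coopposite coalgebra changes only the comultiplication (and antipode), not the multiplication, the algebras $(D(H_{r,s}))^{\coop}$ and $D(H_{r,s})$ have identical module categories. Thus the simple $\ursn$-modules are identified with the simple $D(H_{r,s})$-modules.

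Next I would apply Majid's Theorem \ref{thm:majid}. As a Hopf subalgebra of the finite-dimensional algebra $\ursn$, the algebra $H_{r,s}$ is finite-dimensional, so the theorem applies and identifies $D(H_{r,s})$-modules with left-right Yetter-Drinfeld $H_{r,s}$-modules, simples going to simples. To apply Radford's Theorem \ref{radford} I would then check its hypotheses for $H_{r,s}$: it is graded with $(H_{r,s})_0 = \k G$ for the finite abelian group $G = \langle \w_i' \mid 1 \leq i < n\rangle$, and its grading is bounded since $H_{r,s}$ is finite-dimensional. Theorem \ref{radford} then produces a bijection between isomorphism classes of simple Yetter-Drinfeld $H_{r,s}$-modules and $\widehat{G} \times G$. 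Finally, as noted just before the statement, algebra maps $H_{r,s} \to \k$ are precisely the grouplike elements of $H_{r,s}^*$, and restriction to $G$ identifies these with $\widehat{G} = \Hom(G, \k^{\times})$; together with $G = G(H_{r,s})$ this rewrites $\widehat{G} \times G$ as $G(H_{r,s}^*) \times G(H_{r,s})$, giving the claim.

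Each step is a direct appeal to a cited theorem, so there is no deep obstacle; the only points requiring care are the two translations at the ends of the chain. I must confirm that passing to the coopposite leaves the module category untouched (so that Theorem \ref{double}, an isomorphism of Hopf algebras, can be used purely on the level of representations), and I must pin down the identification $\widehat{G} \cong G(H_{r,s}^*)$ so that the final index set matches the statement exactly. A secondary consistency check is that all the cited results use the same (left-right) Yetter-Drinfeld convention, which the preamble to the paper fixes throughout.
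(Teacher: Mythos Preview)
Your proposal is correct and follows essentially the same approach as the paper, which states the theorem without a formal proof but indicates in the preceding paragraph that it follows by applying Theorems~\ref{thm:majid} and~\ref{radford} (together with Theorem~\ref{double}) and observing that algebra maps $H_{r,s}\to\k$ are exactly the grouplike elements of $H_{r,s}^*$. Your write-up simply makes explicit the details the paper leaves implicit, including the point that passing to the coopposite does not affect the module category.
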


\noindent {\bf 2-cocycles on Borel subalgebras} 
\medskip

The group $G=G(H_{r,s})$
is isomorphic to $(\Z/\ell \Z)^{n-1}$.
We wish to determine all cocycle twists of $H_{r,s}$
arising from cocycles of $G$ as described in Section \ref{sec:twists}.
By Theorems \ref{thm:majid} and \ref{CZ}, the categories of modules of the Drinfeld doubles of all such cocycle
twists are equivalent via an equivalence that preserves comodule
structures.

The cohomology group ${\rm H}^2(G,\k^{\times})$ is known
from the following result of Schur \cite{schur}, which
can also be found in \cite[Prop.\ 4.1.3]{karpilovsky}.

\begin{thm}[Schur \cite{schur}] \ ${\rm H}^2((\Z/\ell\Z)^{m}, \k^{\times}) \cong (\Z/\ell\Z)^{\binom{m}{2}}$.
\end{thm}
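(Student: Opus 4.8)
The plan is to identify $\coh^2(G,\k^{\times})$, for $G=(\Z/\ell\Z)^m$, with the group of alternating bimultiplicative forms $G\times G\to\k^{\times}$, and then simply to count such forms. The bridge is the \emph{antisymmetrization} already appearing in \eqref{eq;betagh}: to a normalized $2$-cocycle $\sigma$ on $G$ I associate $\beta_\sigma(g,h)=\sigma(g,h)\sigma(h,g)^{-1}$. Using the cocycle identity \eqref{eq;groupco} one checks that $\beta_\sigma$ is multiplicative in each variable, and it is tautologically alternating, meaning $\beta_\sigma(g,g)=1$ and $\beta_\sigma(g,h)\beta_\sigma(h,g)=1$. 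Since any $2$-coboundary $d\gamma(g,h)=\gamma(g)\gamma(h)\gamma(gh)^{-1}$ is symmetric (as $G$ is abelian), one has $\beta_{d\gamma}=1$, so $\sigma\mapsto\beta_\sigma$ descends to a homomorphism $\Theta\colon \coh^2(G,\k^{\times})\to\operatorname{Alt}(G,\k^{\times})$.

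First I would show $\Theta$ is injective. If $\beta_\sigma=1$, then $\sigma$ is a \emph{symmetric} $2$-cocycle, and symmetric $2$-cocycles modulo coboundaries classify the abelian extensions of $G$ by $\k^{\times}$, that is $\operatorname{Ext}^1_{\Z}(G,\k^{\times})$. Because $\k$ is algebraically closed of characteristic $0$, the group $\k^{\times}$ is divisible and hence injective as a $\Z$-module, so $\operatorname{Ext}^1_{\Z}(G,\k^{\times})=0$; thus every symmetric cocycle is a coboundary and $[\sigma]=1$. For surjectivity, given an alternating bimultiplicative form $\beta$, I fix generators $x_1,\dots,x_m$ of the $m$ cyclic factors and set
\[
  \sigma\bigl(x_1^{a_1}\cdots x_m^{a_m},\, x_1^{b_1}\cdots x_m^{b_m}\bigr)=\prod_{i<j}\beta(x_i,x_j)^{a_i b_j}.
\]
Since $\beta(x_i,x_j)^{\ell}=\beta(x_i^{\ell},x_j)=1$, this is well defined on $G$ and manifestly bimultiplicative, hence automatically a normalized $2$-cocycle; a short check of the generators gives $\Theta[\sigma]=\beta$, so $\Theta$ is onto.

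It remains to count. An alternating bimultiplicative form on $G$ is determined by its values $\beta(x_i,x_j)$ for $i<j$, the diagonal and lower-triangular values being forced by the alternating condition. As $x_i^{\ell}=1$, each such value lies in the group $\mu_\ell$ of $\ell$th roots of unity, which is cyclic of order $\ell$ because $\k$ is algebraically closed of characteristic $0$, and the $\binom{m}{2}$ values may be prescribed independently. Hence $\operatorname{Alt}(G,\k^{\times})\cong\mu_\ell^{\binom{m}{2}}\cong(\Z/\ell\Z)^{\binom{m}{2}}$, and combining with $\Theta$ yields the theorem. Equivalently, this is $\Hom(\textstyle\bigwedge^2 G,\k^{\times})$ with $\bigwedge^2(\Z/\ell\Z)^m\cong(\Z/\ell\Z)^{\binom{m}{2}}$.

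The main obstacle is the injectivity step: the passage from a cohomology class to its antisymmetrization is faithful only because symmetric cocycles vanish in cohomology, and this rests entirely on $\operatorname{Ext}^1_{\Z}(G,\k^{\times})=0$, i.e. on the divisibility of $\k^{\times}$; over a field lacking enough roots of unity the count would change. As an alternative that sidesteps $\Theta$, one may induct on $m$ using Schur's product formula $M(A\times B)\cong M(A)\oplus M(B)\oplus(A\otimes_{\Z}B)$ together with $M(\Z/\ell\Z)=0$ and $\Z/\ell\Z\otimes_{\Z}\Z/\ell\Z\cong\Z/\ell\Z$; the bookkeeping then reproduces the same $\binom{m}{2}$ factors.
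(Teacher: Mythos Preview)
Your argument is correct. The antisymmetrization map $\Theta$ is indeed an isomorphism from $\coh^2(G,\k^{\times})$ onto the group of alternating bimultiplicative forms, and your injectivity step is the key point: it hinges on $\operatorname{Ext}^1_{\Z}(G,\k^{\times})=0$, which holds precisely because $\k^{\times}$ is divisible.

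Your main route, however, differs from the paper's. The paper does not argue via alternating forms; instead it proceeds by induction on $m$ using the product formula
\[
\coh^2((\Z/\ell\Z)^i,\k^{\times}) \;\cong\; \coh^2((\Z/\ell\Z)^{i-1},\k^{\times}) \times \Hom\!\big((\Z/\ell\Z)\otimes_{\Z}(\Z/\ell\Z)^{i-1},\,\k^{\times}\big),
\]
quoted from Karpilovsky, which is exactly the alternative you sketch in your final paragraph. What your antisymmetrization approach buys is a single intrinsic description of the cohomology and, as a byproduct, the explicit upper-triangular cocycle representatives $\sigma(g,h)=\prod_{i<j}\beta(x_i,x_j)^{a_ib_j}$; in the paper these representatives are obtained separately (Proposition~\ref{upper-triangular}) by unwinding the inductive isomorphism. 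The paper's inductive route, on the other hand, avoids invoking $\operatorname{Ext}$ and the divisibility of $\k^{\times}$, relying instead on a cited structural result, so it is more self-contained at the cost of being less transparent about why the answer is $\binom{m}{2}$ copies of $\Z/\ell\Z$.
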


The isomorphism in the theorem is given by induction, using
\cite[Thm.\ 2.3.13]{karpilovsky}:
\begin{eqnarray*}
  {\rm H}^2((\Z/\ell\Z)^i,\k^{\times}) &\cong &
        {\rm H}^2((\Z/\ell\Z)^{i-1},\k^{\times})\times {\rm Hom}
   ((\Z/\ell \Z)\ot_{\Z} (\Z/\ell\Z)^{i-1},\k^{\times})  \\
 &\cong &
        {\rm H}^2((\Z/\ell\Z)^{i-1},\k^{\times})\times {\rm Hom}
   ((\Z/\ell\Z)^{i-1},\k^{\times}),
\end{eqnarray*}
for $i\geq 2$,
which uses the fact that $(\Z/\ell \Z)\ot_{\Z} (\Z/\ell\Z)^{i-1} \cong (\Z/\ell\Z)^{i-1} $.
This isomorphism is given as follows:
Identify $(\Z/\ell\Z)^i$ with $(\Z/\ell\Z) \times (\Z/\ell\Z)^{i-1}$.
Let $\psi: (\Z/\ell\Z)^{i-1}\times
 (\Z/\ell\Z)^{i-1} \rightarrow \k^{\times}$ be a 2-cocycle,
and $\phi:  (\Z/\ell\Z)^{i-1} \rightarrow \k^{\times}$ be
a group homomorphism.
The corresponding 2-cocycle $\sigma:  (\Z/\ell\Z)^{i}\times
 (\Z/\ell\Z)^{i} \rightarrow \k^{\times}$ is
$$
   \sigma((g,h), (g',h')) = \psi(h,h') \phi(g\cdot h')
$$
for all $g,g'\in \Z/\ell\Z$ and $h,h'\in (\Z/\ell\Z)^{i-1}$,
where $g\cdot h'$ is the image of $g\ot h'$ under the isomorphism
 $(\Z/\ell \Z)\ot_{\Z} (\Z/\ell\Z)^{i-1} \cong (\Z/\ell\Z)^{i-1} $, i.e.\
it is the action of $g\in \Z/\ell\Z$ on the element $h'$ in the
$(\Z/\ell\Z)$-module $(\Z/\ell\Z)^{i-1}$.
The asymmetry in the formula is due to the fact that $\coh^2(\Z/\ell\Z, \k^{\times})=0$.

\begin{example}{\em
If $i=2$, and the generators for  $(\Z/\ell\Z)^2$ are $g_1$ and $g_2$,
we obtain
$$
  \sigma(g_1^{i_1}g_2^{i_2}, g_1^{j_1}g_2^{j_2}) = \psi(g_2^{i_2},g_2^{j_2})\phi(g_1^{i_1}\cdot g_2^{j_2}) = \psi(g_2^{i_2},g_2^{j_2}) \phi(g_2^{i_1j_2}).
$$
Since all 2-cocycles on the cyclic group $\Z/\ell\Z$ are coboundaries, we may
assume without loss of generality that $\psi$ is trivial, so that $\sigma$ is given
by the group homomorphism $\phi: \Z/\ell\Z\rightarrow \k^{\times}$.
If $q$ is a primitive $\ell$th root of unity, any such group homomorphism
just sends the generator of $\Z/\ell\Z$ to a power of $q$, and thus
we obtain representative cocycles, one for each $a\in \{0,1,\ldots,\ell -1\}$:
$$
  \sigma(g_1^{i_1}g_2^{i_2},g_1^{j_1}g_2^{j_2})= q^{a i_1 j_2}.
$$
}
\end{example}

Similarly, by induction, we have the following.

\begin{prop} \label{upper-triangular}
\  Let  $q$  be a primitive $\ell$th root of unity.  \ A set of cocycles $\sigma: (\Z/\ell\Z)^{m}\rightarrow \k^{\times}$
which represents all elements of $\coh^2((\Z/\ell\Z)^m,\k^{\times})$ is given by
\begin{equation}\label{eq:cocycle-formula}
  \sigma((g_1^{i_1}\cdots g_m^{i_m}, \ g_1^{j_1}\cdots g_m^{j_m}))=
  q^{\left(\sum_{1 \leq k < l \leq m}   a_{k,l} \, i_k j_l\right)}
\end{equation}
where $0\leq a_{k,l}\leq \ell -1$.
Thus the cocycles are parametrized by
$m \times m$ strictly upper triangular matrices with entries in $\Z/\ell\Z$.
\end{prop}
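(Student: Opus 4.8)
The plan is to argue by induction on $m$, leveraging the recursive description of the Schur multiplier recalled just before the statement. The base case $m=1$ is immediate: since $\coh^2(\Z/\ell\Z,\k^\times)=0$, the only cohomology class is the trivial one, and this matches \eqref{eq:cocycle-formula}, whose exponent $\sum_{1\le k<l\le 1}$ is an empty sum (correspondingly, the only strictly upper triangular $1\times 1$ matrix is zero). The case $m=2$ is precisely the Example preceding the statement, which identifies the representative cocycles with homomorphisms $\phi\colon\Z/\ell\Z\to\k^\times$ and yields $\sigma(g_1^{i_1}g_2^{i_2},g_1^{j_1}g_2^{j_2})=q^{a\,i_1 j_2}$.

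For the inductive step I would identify $(\Z/\ell\Z)^m$ with $(\Z/\ell\Z)\times(\Z/\ell\Z)^{m-1}$, the first factor generated by $g_1$ and the second by $g_2,\dots,g_m$, and apply the isomorphism
$$\coh^2((\Z/\ell\Z)^m,\k^\times)\cong \coh^2((\Z/\ell\Z)^{m-1},\k^\times)\times\Hom((\Z/\ell\Z)^{m-1},\k^\times)$$
together with the explicit formula $\sigma((g,h),(g',h'))=\psi(h,h')\phi(g\cdot h')$. By the induction hypothesis, every class of $\coh^2((\Z/\ell\Z)^{m-1},\k^\times)$ is represented by a cocycle $\psi$ of the form \eqref{eq:cocycle-formula} with indices restricted to $2\le k<l\le m$, determined by entries $a_{k,l}$. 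Since $q$ is a primitive $\ell$th root of unity, each $\phi\in\Hom((\Z/\ell\Z)^{m-1},\k^\times)$ is determined by its values on generators, $\phi(g_l)=q^{a_{1,l}}$ with $0\le a_{1,l}\le\ell-1$ for $2\le l\le m$.

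The computational heart is to substitute these into $\psi(h,h')\phi(g\cdot h')$. Writing $g=g_1^{i_1}$ and $h'=g_2^{j_2}\cdots g_m^{j_m}$, the action of $i_1\in\Z/\ell\Z$ on the module $(\Z/\ell\Z)^{m-1}$ is componentwise scaling, so $g\cdot h'=g_2^{i_1 j_2}\cdots g_m^{i_1 j_m}$ and hence $\phi(g\cdot h')=q^{\sum_{l=2}^m a_{1,l}\,i_1 j_l}$. Multiplying by $\psi(h,h')=q^{\sum_{2\le k<l\le m}a_{k,l}\,i_k j_l}$ and combining exponents produces exactly $q^{\sum_{1\le k<l\le m}a_{k,l}\,i_k j_l}$, which is \eqref{eq:cocycle-formula} on $(\Z/\ell\Z)^m$. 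Thus each step appends the new row $(a_{1,2},\dots,a_{1,m})$ to the strictly upper triangular matrix, and because the recursive isomorphism is a bijection, the $\ell^{\binom{m}{2}}$ matrices built this way give a complete set of representatives for $\coh^2((\Z/\ell\Z)^m,\k^\times)$, consistent with Schur's theorem.

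The main obstacle is bookkeeping rather than conceptual: one must confirm that the asymmetric expression $\sigma((g,h),(g',h'))=\psi(h,h')\phi(g\cdot h')$ is genuinely a $2$-cocycle and that the recursive isomorphism is compatible with the inductively chosen representatives, so that multiplying the inductively built $\psi$ by $\phi$ adds precisely the entries $a_{1,l}$ and no spurious symmetric terms. The vanishing $\coh^2(\Z/\ell\Z,\k^\times)=0$, which lets us discard the $\psi$-contribution along the split-off cyclic factor at each stage, is exactly what keeps the resulting matrix strictly upper triangular.
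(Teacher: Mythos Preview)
Your argument is correct and follows exactly the approach the paper indicates: the paper simply writes ``Similarly, by induction, we have the following'' after the $m=2$ example, and you have carried out precisely that induction using the recursive isomorphism $\coh^2((\Z/\ell\Z)^m,\k^\times)\cong\coh^2((\Z/\ell\Z)^{m-1},\k^\times)\times\Hom((\Z/\ell\Z)^{m-1},\k^\times)$ and the explicit formula $\sigma((g,h),(g',h'))=\psi(h,h')\phi(g\cdot h')$. Your computation of $\phi(g\cdot h')=q^{\sum_{l=2}^m a_{1,l}i_1j_l}$ and the resulting combination of exponents is exactly the bookkeeping the paper leaves to the reader.
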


Next  we determine  isomorphisms among cocycle twists of the Hopf
algebras $H_{r,s}$. We will denote by $|r|$ the order of $r$ as a
root of unity, and use $\hbox{\rm lcm}(|r|, |s|)$ to denote the least common
multiple of the orders of $r$ and $s$.
Fix $n\geq 2$, and let $G= \langle \w_i' \mid 1 \leq i < n \rangle$, the grouplike
elements of $H_{r,s} = (\b'_{r,s})^{\coop} \subset \ursn^{\coop}$.

\begin{thm}\label{prop:findim}  The following are equivalent: 
\begin{itemize}
\item[{\rm (i)}]
There is a cocycle $\sigma$ induced from $G$ such that $H_{r,s}^{\sigma} \cong H_{r',s'}$
as graded Hopf algebras.
\item[{\rm (ii)}] $\mbox{\rm lcm}(|r|,|s|)=\mbox{\rm lcm}(|r'|,|s'|)$ and 
 $r' (s')^{-1}=rs^{-1}$.
\end{itemize}
\end{thm}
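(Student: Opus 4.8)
The plan is to present $H_{r,s}$ through its degree-$\le 1$ generators and to track how a cocycle $\sigma$ induced from $G$ deforms the relations. Write $g_i=\w_i'$, so $G=\langle g_1,\dots,g_{n-1}\rangle\cong(\Z/\ell\Z)^{n-1}$ lies in degree $0$ while $H_1=\bigoplus_i \k f_i$. From (R3) the cross relations take the form $g_k f_j = c_{kj}\,f_j g_k$, where $c_{kk}=rs^{-1}$, $c_{k,k+1}=r^{-1}$, $c_{k+1,k}=s$, and $c_{kj}=1$ for $|k-j|>1$; together with the Serre relations (R7) these present $H_{r,s}$ as a graded Hopf algebra. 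By \eqref{eq:cocycle-formula} of Proposition \ref{upper-triangular}, a cocycle $\sigma$ from $G$ is a \emph{bicharacter} on $G$, and the governing quantity will be the skew bicharacter $\bar\sigma(g,h):=\sigma(g,h)\sigma^{-1}(h,g)$, which satisfies $\bar\sigma(g,g)=1$ and $\bar\sigma(g,h)\bar\sigma(h,g)=1$. Evaluating the twisted product \eqref{eq;algtwist} on generators (only grouplike arguments survive in $\sigma$) gives $g_k\cdot_\sigma f_j=\sigma(g_k,g_j)\,g_k f_j$, $f_j\cdot_\sigma g_k=\sigma(g_j,g_k)\,f_j g_k$, and $f_i\cdot_\sigma f_j=\sigma(g_i,g_j)\,f_i f_j$. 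Hence the coalgebra, the grading, and the subalgebra $\k G$ are unchanged, while the cross relations become $g_k\cdot_\sigma f_j=c'_{kj}\,f_j\cdot_\sigma g_k$ with $c'_{kj}=c_{kj}\,\bar\sigma(g_k,g_j)$. Skew-symmetry then yields the two invariance facts I will exploit: the diagonal is unchanged, $c'_{ii}=c_{ii}=rs^{-1}$, and the adjacency products are unchanged, $c'_{k,k+1}c'_{k+1,k}=c_{k,k+1}c_{k+1,k}$.

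\textbf{(i) $\Rightarrow$ (ii).} Assume $H_{r,s}^{\sigma}\cong H_{r',s'}$ as graded Hopf algebras. Restricting the isomorphism to degree $0$ gives a Hopf, hence group, isomorphism $\k G\cong\k G'$; since these groups are $(\Z/\ell\Z)^{n-1}$ and $(\Z/\ell'\Z)^{n-1}$, this forces $\ell=\ell'$, that is $\mathrm{lcm}(|r|,|s|)=\mathrm{lcm}(|r'|,|s'|)$. The degree-$1$ component is a Yetter-Drinfeld module over the grouplikes, and its braiding is an invariant of the graded Hopf algebra; by the computation above the diagonal of this braiding is uniformly $rs^{-1}$ on the $H_{r,s}^{\sigma}$ side and uniformly $r's'^{-1}$ on the $H_{r',s'}$ side, whence $rs^{-1}=r's'^{-1}$.

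\textbf{(ii) $\Rightarrow$ (i).} Conversely, assume both conditions. Since $\ell=\ell'$, each of $r,s,r',s'$ is an $\ell$th root of unity, hence a power of $q$; the relation $rs^{-1}=r's'^{-1}$ lets me set $t:=r/r'=s/s'=q^{a}$ for some $a\in\Z/\ell\Z$. Using Proposition \ref{upper-triangular}, I choose the cocycle $\sigma$ whose strictly upper triangular exponent matrix has $a_{i,i+1}=a$ and $a_{kl}=0$ for $l>k+1$, so that $\bar\sigma(g_i,g_{i+1})=t$ and $\bar\sigma(g_k,g_l)=1$ for $|k-l|>1$. Then the twisted cross relations match those of $H_{r',s'}$, since $c'_{i,i+1}=r^{-1}t=r'^{-1}$ and $c'_{i+1,i}=s\,t^{-1}=s'$. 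The essential remaining check is that the twisted Serre relations (R7) coincide with those of $H_{r',s'}$: rewriting each ordinary monomial of $f$'s in terms of $\cdot_\sigma$ via $f_{a_1}\cdots f_{a_m}=\bigl(\prod_{p<p'}\sigma(g_{a_p},g_{a_{p'}})\bigr)^{-1} f_{a_1}\cdot_\sigma\cdots\cdot_\sigma f_{a_m}$, the three terms of the first relation acquire relative factors that are powers of $t$, converting the coefficients $(r^{-1}+s^{-1})$ and $r^{-1}s^{-1}$ into $(r^{-1}+s^{-1})t$ and $r^{-1}s^{-1}t^{2}$. The identities $(r^{-1}+s^{-1})t=r'^{-1}+s'^{-1}$ and $r^{-1}s^{-1}t^{2}=r'^{-1}s'^{-1}$ hold precisely because $r'=r/t$ and $s'=s/t$, and the companion relation in (R7) is identical by symmetry. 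Thus $g_i\mapsto g_i'$, $f_i\mapsto f_i'$ extends to the desired graded Hopf algebra isomorphism.

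\textbf{Main obstacle.} The delicate point is the converse direction: one must produce a \emph{single} cocycle that simultaneously corrects the cross relations (R3) and the Serre relations (R7). This succeeds only because both requirements reduce to the one scalar identity $r/r'=s/s'$, which is exactly equivalent to $rs^{-1}=r's'^{-1}$. I would also be careful with the co-opposite and left-right Yetter-Drinfeld conventions when fixing the order of arguments in $\bar\sigma$, noting that its skew-symmetry makes the invariants used in the forward direction independent of those conventions.
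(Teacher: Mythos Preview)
Your approach is the same as the paper's: for (ii)$\Rightarrow$(i) you use precisely the cocycle of \eqref{eq:sigrr} (the superdiagonal bicharacter), and for (i)$\Rightarrow$(ii) you read off $rs^{-1}$ from the diagonal cross relation $c'_{ii}$, which is the paper's argument rephrased as a braiding invariant. However, there is a genuine omission in (ii)$\Rightarrow$(i). The algebras $H_{r,s}$ and $H_{r',s'}$ are the \emph{restricted} Borel subalgebras: beyond (R1), (R3), (R5), (R7) their presentations include $(\w_i')^\ell=1$ and, crucially, $\F_{i,j}^{\,\ell}=0$ for all $i\ge j$. You verify only (R3) and (R7), so at this stage you have at best a surjection from the \emph{unrestricted} $\mathcal{B}'_{r',s'}$ onto $H_{r,s}^{\sigma}$, not a map out of $H_{r',s'}$; the assignment $g_i\mapsto g_i$, $f_i\mapsto f_i$ does not yet ``extend'' as claimed. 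The paper closes this gap by proving inductively that the $\sigma$-twisted root vectors coincide with the ordinary ones, $\F^{\sigma}_{i,j}=\F_{i,j}$, and then, using $(\pi\otimes\id\otimes\pi)\Delta^{2}(\F_{i,j})=\w'_{i,j}\otimes\F_{i,j}\otimes 1$, that $\underbrace{\F_{i,j}\cdot_{\sigma}\cdots\cdot_{\sigma}\F_{i,j}}_{m}=\sigma(\w'_{i,j},\w'_{i,j})^{\binom{m}{2}}\F_{i,j}^{\,m}$, so the $\ell$th $\sigma$-power vanishes iff the ordinary one does. Your monomial-rewriting formula would handle the second step, but you still need the inductive identification $\F^{\sigma}_{i,j}=\F_{i,j}$ before it applies; once that is in place the dimension count (from the lcm equality) finishes the isomorphism.

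Two smaller points. Proposition~\ref{upper-triangular} does not say every cocycle on $G$ \emph{is} a bicharacter, only that every cohomology class contains one; you should insert the clause (which the paper makes explicit) that cohomologous cocycles give isomorphic twists before invoking $\bar\sigma(g,g)=1$. And your braiding-invariant argument for (i)$\Rightarrow$(ii) tacitly uses that a graded Hopf isomorphism sends each $f_i$ to a scalar multiple of a single $f_{\pi(i)}'$; this holds because the degree-$1$ skew-primitives over a fixed grouplike form a line, and the paper makes this explicit for $f_1$.
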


We note that if $\mbox{lcm}(|r|,|s|)\neq\mbox{lcm}(|r'|,|s'|)$,
then the dimensions of $H_{r,s}$ and of $H_{r',s'}$ are different.

\begin{proof}
First assume that $\mbox{\rm lcm}(|r|,|s|) =\mbox{\rm lcm}(|r'|,|s'|)$ and
$r'(s')^{-1}=rs^{-1}$. Let $\sigma=\sigma_{r,r'}$ be the cocycle defined by
\begin{equation}\label{eq:sigrr} \sigma(\w_1'^{i_1} \cdots \w_{n-1}'^{i_{n-1}}, \w_1'^{j_1} \cdots \w_{n-1}'^{j_{n-1}})= (r(r')^{-1})^{\sum_{k=1}^{n-2} i_kj_{k+1}}. \end{equation}
To compare this with the expression in  (\ref{eq:cocycle-formula}),
write $r(r')^{-1} = q^a$ for some $a$, and note that this cocycle coincides with the
choice $a_{k,l} = a \delta_{l, k+1}$.
We claim that $H_{r,s}^{\sigma}\cong H_{r',s'}$.
We will prove this by verifying relations
(R3) and (R7) of Definition \ref{quantum-group}, as well
as the nilpotency of certain PBW basis elements.
Each of the other relations is either automatic or does not apply to $H_{r,s}$.
First we check that the Serre relations (R7)  for $H_{r',s'}$ hold in $H_{r,s}^{\sigma}$:
\begin{eqnarray*}
&&  \hspace{-1cm}
  f_i\cdot_{\sigma} f_i\cdot_{\sigma} f_{i+1} - \big((r')^{-1}+(s')^{-1}\big)f_i\cdot_{\sigma} f_{i+1}\cdot_{\sigma} f_i
  + (r')^{-1}(s')^{-1} f_{i+1}\cdot_{\sigma} f_i\cdot_{\sigma} f_i \\
&&\hspace{-.6cm}= \sigma(\omega_i',\omega_i')\sigma((\omega_i')^2,\omega_{i+1}) f_i^2f_{i+1} - ((r')^{-1}+(s')^{-1})\sigma(\omega_i',\omega_{i+1}')\sigma(\omega_i'\omega_{i+1}',
\omega_i')f_if_{i+1}f_i\\
  &&\hspace{2cm} + (r')^{-1}(s')^{-1}\sigma(\omega_{i+1}',\omega_i')\sigma(\omega_i'\omega_{i+1}',\omega_i') f_{i+1}f_i^2\\
&&\hspace{-.6cm}= \ (r(r')^{-1})^2 f_i^2 f_{i+1} - ((r')^{-1}+(s')^{-1})(r(r')^{-1}) f_if_{i+1}f_i + (r')^{-1}(s')^{-1}f_{i+1}f_i^2\\
&&\hspace{-.6cm}= \ (r(r')^{-1})^2\big(f_i^2 f_{i+1} - (r^{-1}+s^{-1}) f_if_{i+1}f_i + r^{-1}s^{-1} f_{i+1}f_i^2\big)\\
&&\hspace{-.6cm}= \ 0,
\end{eqnarray*}
by (R7) for $H_{r,s}$. Similarly, the other Serre relation in (R7) holds.

Next let $$\F_{i,j}^{\sigma} = f_i \cdot_\sigma \F_{i-1,j}^{\sigma} - s' \F_{i-1,j}^{\sigma} \cdot_{\sigma}f_i$$ for $i >j$ and set $\F_{i,i}^{\sigma} = f_i.$   We need to show that $(\F_{i,j}^{\sigma})^{\ell}=0$ in $H_{r,s}^{\sigma}$ (and that no smaller power of $\F_{i,j}^{\sigma}$ is zero). In what follows we show by induction on $i-j$ that $\F_{i,j}^{\sigma}= \F_{i,j}$.
Let $\w_{i,j}'= \w'_j \cdots \w'_i.$

\begin{itemize}
\item If $i=j$, then $\F_{i,i}^{\sigma}= f_{i}= \F_{i,i}$.
\item Assume $\F_{i-1,j}^{\sigma} = \F_{i-1,j}$, then
\begin{eqnarray*}
\F_{i,j}^{\sigma} &=& f_i \cdot_\sigma \F_{i-1,j}^{\sigma} - s' \F_{i-1,j}^{\sigma} \cdot_{\sigma}f_i \\
&=&  f_i \cdot_\sigma \F_{i-1,j} - s' \F_{i-1,j} \cdot_{\sigma}f_i \\
&=& \sigma(\w_i', \w'_{i-1,j})f_i\F_{i-1,j} - s'\sigma(\w_{i-1,j}', \w_i')\F_{i-1,j}f_i \\
&=& f_i\F_{i-1,j} - s'r(r')^{-1}\F_{i-1,j}f_i \\
&=&  f_i\F_{i-1,j} - s\F_{i-1,j}f_i  \ \ = \ \ \F_{i,j}.
\end{eqnarray*}
\end{itemize}
We will use the following identity which is a consequence of \cite[Lem.~2.22] {benkart-witherspoon}:
$$(\pi \otimes \id \otimes \pi) \Delta^{2}(\F_{i,j})= \w'_{i,j} \otimes \F_{i,j} \otimes 1,$$
where $\pi$ is the projection onto $\k G$ and  $G = \langle \omega_i' \mid 1\leq i <  n \rangle$.
Then $$\F_{i,j} \cdot_{\sigma} \F_{i,j} = \sigma(\w'_{i,j}, \w'_{i,j})\F_{i,j}^{2}$$ and
$\underbrace{\F_{i,j} \cdot_{\sigma} \cdots \cdot_{\sigma} \F_{i,j}}_{m} =
\left(\sigma(\w'_{i,j}, \w'_{i,j})\right)^{\binom{m}{2}} \F_{i,j}^m = 0 $
if and only if $\F_{i,j}^m=0.$

Finally  we check that the second relation in (R3) of Definition
 \ref{quantum-group} for $H_{r',s'}$ holds in $H_{r,s}^{\sigma}.$
Note that the second relation in (R3) for $H_{r,s}$ can be written as

$$\w'_if_j=b_{i,j}\,f_j\w'_i, \
\ \hbox{\rm where} \  \
 b_{i,j}=
\begin{cases}
s, & j= i-1 \\
rs^{-1}, & j=i \\
r^{-1}, & j=i+1 \\
1, & \text{otherwise.}
\end{cases}$$

\noindent Hence, we need to verify that $\w'_i \cdot_{\sigma}f_j=c_{i,j}\, f_j\cdot_{\sigma}\w'_i,$
where
$
c_{i,j}=
\begin{cases}
s', & j= i-1 \\
r'(s')^{-1}, & j=i \\
(r')^{-1}, & j=i+1 \\
1, & \text{otherwise;}
\end{cases}$

\begin{eqnarray*}
\w'_i \cdot_{\sigma}f_j =\sigma(\w'_i, \w'_j)\w'_if_j &=&
\begin{cases}
r(r')^{-1} \w'_if_j, & j= i+1 \\
 \w'_if_j, & \text{otherwise}
\end{cases} \\
&=&
\begin{cases}
b_{i,j}r(r')^{-1} f_j\w'_i, &  j= i+1 \\
b_{i,j} f_j\w'_i, & \text{otherwise}
\end{cases} = \begin{cases}
s \, f_j\w'_i, & j= i-1 \\
rs^{-1}f_j\w'_i & j=i \\
(r')^{-1}f_j\w'_i & j= i+1 \\
f_j\w'_i & \text{otherwise.}
\end{cases}
\end{eqnarray*}

On the other hand
\begin{eqnarray*}
c_{i,j} \, f_j \cdot_{\sigma} w'_i &=& c_{i,j} \sigma(\w'_j,\w'_i)\, f_j\w'_i =
\begin{cases}
r(r')^{-1}c_{i,j} f_j\w'_i, & j = i-1 \\
c_{i,j} f_j\w'_i, & \text{otherwise}
\end{cases}
\\
&=&
\begin{cases}
s \,f_j\w'_i, &j= i-1 \\
rs^{-1} f_j\w'_i, &j=i \\
(r')^{-1}f_j\w'_i, & j= i +1 \\
f_j\w'_i& \text{otherwise.}
\end{cases}
\end{eqnarray*}
Thus $H_{r,s}^{\sigma}\cong H_{r',s'}$.

Now assume that $H_{r,s}^{\sigma}\cong H_{r',s'}$ as graded Hopf algebras
for some cocycle $\sigma$ induced from $G$.
Then the dimensions of these two algebras are the same, and so 
$\mbox{\rm lcm}(|r|,|s|) = \mbox{\rm lcm}(|r'|,|s'|)$.
Since cohomologous cocycles give isomorphic twists,
we may assume without loss of generality that $\sigma$ is given by formula (\ref{eq:cocycle-formula}).
Since $\deg(f_1)=1$,  the element $f_1$ gets mapped
under the isomorphism $H_{r,s}^{\sigma}\stackrel{\sim}{\rightarrow} H_{r',s'}$  to another skew-primitive element of degree 1, which must be
a scalar multiple of some $f_i$.
This forces $\omega_1'$ to be mapped to $\omega_i'$.
Then the second relation in (R3) of Definition \ref{quantum-group} for $H_{r,s}^{\sigma}$ is
$\omega_1'\cdot_{\sigma} f_1 = \sigma(\omega_1',\omega_1') \omega_1' f_1 = rs^{-1}f_1\omega_1'
=rs^{-1}f_1\cdot_{\sigma}\omega_1'$, while the second relation in (R3) for $H_{r',s'}$ is $\omega_i'f_i = r'(s')^{-1}f_i\omega_i'$.
The existence of the isomorphism now implies  that $r'(s')^{-1}=rs^{-1}$.  \end{proof}

\begin{remark}{\rm The cocycles $\sigma$ in \eqref{eq:sigrr}  do not exhaust all possible
cocycles, as can be seen by comparing  \eqref{eq:sigrr}  with Proposition \ref{upper-triangular}.
Other cocycles
will take $H_{r,s}$ to multi-parameter versions of the Borel subalgebra not considered here.
For comparison, examples of some different  finite-dimensional multi-parameter quantum groups
of $\mathfrak{sl}_n$-type with different Borel subalgebras appear in \cite{towber-westreich}.} \end{remark}

\begin{remark}\label{rem:YDcorr}{\rm The correspondence between simple Yetter-Drinfeld modules
resulting from Theorem
\ref{prop:findim}  can be realized explicitly as follows.  Assume that $\mbox{\rm lcm}(|r|,|s|)=\mbox{\rm lcm}(|r'|,|s'|)$ and $r'(s')^{-1} = rs^{-1}$.   Let $y,y'$ be such that
$r = q^y$, $r' = q^{y'}$.    Recall that by Radford's result (Theorem \ref{radford}),
the simple Yetter-Drinfeld modules for
$H = H_{r,s}$ have the form $H \db g$ where $g \in G$, and $\beta \in \widehat G$.
Let $g = (\w_1')^{d_1} \cdots (\w_{n-1}')^{d_{n-1}}$ and suppose $\beta \in \widehat
G$ is defined by integers $\beta_i$ for which $\beta(\w_i') = q^{\beta_i}$.   Then by Theorem \ref{modules-twist},
the correspondence is given by  $\left(H_{r,s}\db g \right)^\sigma = H_{r',s'} \DOT_{\beta_{g,\sigma}} g$,  where
\begin{equation}\label{eq:bgsig} \beta_{g,\sigma}( \w_i') = q^{\beta_{i}} (r(r')^{-1})^{d_{i-1}}(r(r')^{-1})^{-d_{i+1}}= q^{\beta_i + (d_{i-1}-d_{i+1})(y-y')}\end{equation}
and $d_0=d_n=0$.
} \end{remark}

\begin{cor} Let $q$ be a primitive $\ell$th root of unity. Assume $r,s$ are powers
of $q$. 
\begin{itemize}
\item[\rm (a)]  If $|rs^{-1}| = \mbox{\rm lcm}(|r|,|s|)$, then  
there is a one-to-one correspondence
between simple Yetter-Drinfeld modules for $H_{r,s}$ and those for $H_{1,r^{-1}s}$.
\item[\rm (b)]  If $\ell$ is a prime and $r \neq s$,
 there is a one-to-one correspondence
between simple Yetter-Drinfeld modules for $H_{r,s}$ and those for $H_{1,r^{-1}s}$.
Hence,  the simple Yetter-Drinfeld modules of  $H_{r,s}$  and $H_{r',s'}$
(and also the simple modules for $D(H_{r,s})$ and $D(H_{r',s'})$)
are in bijection for any two pairs $(r,s)$, $(r',s')$, with $r \neq s$ and $r' \neq s'$.
In particular, when (\ref{gcd-condition}) holds for both pairs, there is a one-to-one
correspondence between simple modules for $\ursn$ and those for ${\mathfrak{u}}_{r',s'}({\mathfrak{sl}}_{n})$.
\end{itemize}
\end{cor}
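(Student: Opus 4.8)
The plan is to derive the Corollary directly from Theorem~\ref{prop:findim} by verifying, in each part, that the hypotheses of that theorem are met so that the promised cocycle twist (and hence the category equivalence of Yetter-Drinfeld modules) exists. Recall that Theorem~\ref{prop:findim} furnishes a cocycle $\sigma$ with $H_{r,s}^\sigma \cong H_{r',s'}$ precisely when $\mathrm{lcm}(|r|,|s|) = \mathrm{lcm}(|r'|,|s'|)$ and $r'(s')^{-1} = rs^{-1}$; combined with Theorem~\ref{CZ}, such an isomorphism yields a dimension-preserving bijection of simple Yetter-Drinfeld modules, and combined with Theorems~\ref{thm:majid} and~\ref{double} it descends to a bijection of simple modules for the associated Drinfeld doubles whenever \eqref{gcd-condition} holds.

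For part~(a), I would take $(r',s') = (1, r^{-1}s)$ and simply check the two conditions of Theorem~\ref{prop:findim}. The quotient condition is immediate since $r'(s')^{-1} = 1\cdot(r^{-1}s)^{-1} = s^{-1}r = rs^{-1}$ (using commutativity in $\k^\times$). For the lcm condition, note $|r'| = |1| = 1$ and $|s'| = |r^{-1}s| = |rs^{-1}|$, so $\mathrm{lcm}(|r'|,|s'|) = |rs^{-1}|$; the hypothesis $|rs^{-1}| = \mathrm{lcm}(|r|,|s|)$ then gives exactly $\mathrm{lcm}(|r'|,|s'|) = \mathrm{lcm}(|r|,|s|)$. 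Theorem~\ref{prop:findim} then applies and delivers the correspondence.

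For part~(b), the key observation is that when $\ell$ is prime, the single hypothesis $r\neq s$ already forces $|rs^{-1}| = \ell$: since $r,s$ are powers of $q$ and $rs^{-1}\neq 1$, the element $rs^{-1}$ is a nontrivial power of the primitive $\ell$th root $q$, hence has order $\ell$ (every nonidentity element of a group of prime order $\ell$ has order $\ell$). On the other hand, $r$ and $s$ are powers of $q$, so $|r|$ and $|s|$ each divide $\ell$, whence $\mathrm{lcm}(|r|,|s|)$ divides $\ell$; and since $r\neq s$ at least one of them is a nontrivial power of $q$, forcing $\mathrm{lcm}(|r|,|s|) = \ell = |rs^{-1}|$. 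Thus the hypothesis of part~(a) is automatically satisfied, giving the correspondence between $H_{r,s}$ and $H_{1,r^{-1}s}$. To compare two arbitrary admissible pairs $(r,s)$ and $(r',s')$, I would route both through their common normalized form $H_{1,\bullet}$: each is equivalent to $H_{1, r^{-1}s}$ and $H_{1,(r')^{-1}s'}$ respectively, and since for prime $\ell$ every nontrivial second coordinate is a generator, these normalized algebras coincide up to the harmless relabeling $\w_i' \mapsto (\w_i')^c$ induced by an automorphism of $(\Z/\ell\Z)^{n-1}$; composing the two equivalences yields the desired bijection. The final sentence about $\ursn$ then follows by invoking Theorem~\ref{double} at both pairs (so that the Borel equivalences lift to the doubles) together with the identification of $\ursn$-modules with Yetter-Drinfeld $H_{r,s}$-modules from Theorems~\ref{thm:majid} and~\ref{double}.

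I expect the only genuine subtlety to lie in the comparison of two \emph{arbitrary} pairs in part~(b): one must argue carefully that the two normalized algebras $H_{1,r^{-1}s}$ and $H_{1,(r')^{-1}s'}$ are isomorphic as graded Hopf algebras even though their second parameters may differ. The resolution is that for prime $\ell$ both second parameters are primitive $\ell$th roots of unity, so they are related by raising $q$ to a unit power mod $\ell$, and this induces a graded Hopf algebra isomorphism via an automorphism of the grouplike group $G\cong(\Z/\ell\Z)^{n-1}$ rescaling the $\w_i'$; everything else is the routine verification already packaged inside Theorem~\ref{prop:findim}, so I would state this relabeling explicitly and otherwise defer to that theorem.
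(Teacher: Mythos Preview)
Your overall approach matches the paper's: both parts reduce to verifying the hypotheses of Theorem~\ref{prop:findim} with $(r',s')=(1,r^{-1}s)$, and for~(b) you correctly observe that primality of $\ell$ forces $|rs^{-1}|=\ell=\mathrm{lcm}(|r|,|s|)$, so~(a) applies. The paper's proof is essentially identical here.

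There is, however, a genuine gap in your treatment of the ``arbitrary pairs'' step in~(b). You propose that $H_{1,r^{-1}s}\cong H_{1,(r')^{-1}s'}$ via the group automorphism $\omega_i'\mapsto(\omega_i')^c$ where $(r^{-1}s)^c=(r')^{-1}s'$. This does not work: applying that map to the relation $\omega_i'f_i=(r^{-1}s)^{-1}f_i\omega_i'$ (from (R3) with $r=1$) and comparing with $(\omega_i')^c f_i=((r')^{-1}s')^{-c}f_i(\omega_i')^c$ in the target forces $(r^{-1}s)^{-1}=((r')^{-1}s')^{-c}=(r^{-1}s)^{-c^2}$, i.e.\ $c^2\equiv 1\pmod\ell$. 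So your rescaling only yields an isomorphism when $c=\pm1$, not for a general unit $c$.

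The paper handles this point more loosely, simply asserting that ``the representation theory of $H_{1,q}$ for any primitive $\ell$th root $q$ of $1$ is the same as that of any other.'' The intended justification is a Galois/base-change argument rather than a $\k$-algebra isomorphism: $H_{1,q}$ is defined by relations with coefficients in $\mathbb{Q}(q)\cong\mathbb{Q}(\zeta_\ell)$, and the two choices of primitive root correspond to two embeddings of this cyclotomic field into $\k$, related by a Galois automorphism. Twisting by that automorphism carries one algebra (and its Yetter-Drinfeld category) to the other, preserving dimensions of simples. You should replace your rescaling argument with this observation; the rest of your proof stands.
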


\begin{proof}  The proof of Theorem \ref{prop:findim} shows that when
$\mbox{\rm lcm}(|r|,|s|)=\mbox{\rm lcm}(|r'|,|s'|)$ and $r'(s')^{-1} = rs^{-1}$, then
$H_{r,s}^\sigma = H_{\xi r, \xi s} = H_{r',s'}$, where $\xi = r' r^{-1}$ and $\sigma = \sigma_{r,r'}$.
Applying this to the case $r' = 1$, $s' = r^{-1}s$,  $\xi = r^{-1}$, we get
$H_{r,s}^\sigma = H_{1,r^{-1}s}$.  The correspondence between modules then comes from
Theorem \ref{modules-twist} as in Remark~\ref{rem:YDcorr}.
The first statement in part (b) follows
immediately from (a) since $|r^{-1}s| = \mbox{\rm lcm}(|r|,|s|) = \ell$.
Now observe that the representation theory of $H_{1,q}$ for any primitive $\ell$th
root $q$ of 1 is the same as that of any other.
\end{proof}

\medskip

If  $rs^{-1}$ is a primitive
$\ell$th root of unity for $\ell$ odd, we may assume $rs^{-1}=q^2$, where $q$ is a primitive $\ell$th root of unity.   Then if $\mbox{\rm lcm}(|r|,|s|)= \ell$,   the algebra $H_{q,q^{-1}}$ is a cocycle
twist of $H_{r,s}$. Combined with Theorem \ref{double}, this
implies the following:

\begin{thm}\label{sequence}
Let $\ell$ be odd and
suppose $\mbox{\rm lcm}(|r|,|s|)= \ell = |rs^{-1}|$, and $q$ is a primitive
$\ell$th root of 1 with $rs^{-1}=q^2$.  Assume that hypothesis (\ref{gcd-condition}) holds,
where  $r = q^y$ and $s = q^z$. Then
the categories of $\ {\mathfrak{u}}_{r,s}({\mathfrak{sl}}_{n})$-modules, ${\mathfrak{u}}_{q,q^{-1}}({\mathfrak{sl}}_{n})$-modules,
and  $\ {\mathfrak{u}}_{1,r^{-1}s}({\mathfrak{sl}}_{n})$-modules are all
monoidally equivalent.
\end{thm}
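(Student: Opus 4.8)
The plan is to express each of the three module categories as a category of Yetter-Drinfeld modules over the corresponding Borel subalgebra, and then to link those Yetter-Drinfeld categories by cocycle twists. Write $H_{r,s}=(\b'_{r,s})^{\coop}$ as in \eqref{eq:hrsG}. Under hypothesis \eqref{gcd-condition}, Theorem \ref{double} gives $\ursn\cong (D(H_{r,s}))^{\coop}$, and Theorem \ref{thm:majid} identifies $D(H_{r,s})$-modules with the objects of ${}_{H_{r,s}}\mathcal{YD}^{H_{r,s}}$; together these identify $\ursn$-modules with Yetter-Drinfeld $H_{r,s}$-modules as monoidal categories. Applying the same two theorems to the parameter pairs $(q,q^{-1})$ and $(1,r^{-1}s)$, the assertion reduces to proving that the three categories ${}_{H_{r,s}}\mathcal{YD}^{H_{r,s}}$, ${}_{H_{q,q^{-1}}}\mathcal{YD}^{H_{q,q^{-1}}}$ and ${}_{H_{1,r^{-1}s}}\mathcal{YD}^{H_{1,r^{-1}s}}$ are monoidally equivalent.

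For this reduction I would verify the two numerical conditions of Theorem \ref{prop:findim} for the three pairs, namely that they share a common value of $rs^{-1}$ and a common least common multiple of the orders of their entries. Each ratio equals $q^2$: by hypothesis for $(r,s)$, because $q\cdot q=q^2$ for $(q,q^{-1})$, and because $(r^{-1}s)^{-1}=rs^{-1}=q^2$ for $(1,r^{-1}s)$. The orders agree precisely because $\ell$ is odd: $|q|=|q^{-1}|=\ell$, while $r^{-1}s=q^{-2}$ has order $\ell/\gcd(2,\ell)=\ell$, so every pair has $\mbox{lcm}=\ell$. Theorem \ref{prop:findim} then exhibits $H_{q,q^{-1}}$ and $H_{1,r^{-1}s}$ as cocycle twists of $H_{r,s}$ via the explicit cocycles of \eqref{eq:sigrr}, and Theorem \ref{CZ} turns each such twist into a monoidal equivalence of Yetter-Drinfeld categories, with $H_{r,s}$ serving as a common hub.

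The step I expect to demand the most care is ensuring that Theorem \ref{double} actually applies to $(q,q^{-1})$ and to $(1,r^{-1}s)$, so that ${\mathfrak{u}}_{q,q^{-1}}({\mathfrak{sl}}_n)$ and ${\mathfrak{u}}_{1,r^{-1}s}({\mathfrak{sl}}_n)$ are themselves Drinfeld doubles; the gcd hypothesis \eqref{gcd-condition} is assumed only for $(r,s)$ and is not preserved under the twist, so it must be re-checked pair by pair. Writing $N(y,z)=y^{n-1}-y^{n-2}z+\cdots+(-1)^{n-1}z^{n-1}$ for the integer in \eqref{gcd-condition}, the pair $(1,r^{-1}s)$ has exponents $y=0$ and $z\equiv -2\pmod{\ell}$, so $N(0,z)=(-1)^{n-1}z^{n-1}\equiv 2^{n-1}$, automatically coprime to the odd integer $\ell$; thus \eqref{gcd-condition} holds here for free. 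For $(q,q^{-1})$ the exponents are $y=1,\ z=-1$ and $N(1,-1)=n$, so the condition there becomes $\gcd(n,\ell)=1$, which I would record as the precise extra arithmetic input needed before placing ${\mathfrak{u}}_{q,q^{-1}}({\mathfrak{sl}}_n)$ into the chain. Once all three algebras are confirmed to be Drinfeld doubles, composing the identifications of the first paragraph with the monoidal equivalences of the second yields the asserted monoidal equivalence of the three module categories.
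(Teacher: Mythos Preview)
Your approach is exactly the paper's: the paper gives no formal proof but simply observes, in the sentence preceding the theorem and in the Corollary just before it, that $H_{q,q^{-1}}$ and $H_{1,r^{-1}s}$ are cocycle twists of $H_{r,s}$ via Theorem~\ref{prop:findim}, and then appeals to Theorem~\ref{double} (together with Theorems~\ref{thm:majid} and~\ref{CZ}). Your explicit re-check of hypothesis~\eqref{gcd-condition} for the other two parameter pairs already goes beyond what the paper records; in particular your (correct) observation that applying Theorem~\ref{double} to $(q,q^{-1})$ amounts to $\gcd(n,\ell)=1$, a condition not implied by the stated hypotheses (e.g.\ $n=\ell=3$, $r=q^2$, $s=1$ satisfies everything in the theorem but has $\gcd(n,\ell)=3$), is a genuine point the paper leaves unaddressed.
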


\begin{rem}\label{rem:factor}{\rm  The one-parameter quantum group $\mathfrak{u}_{q}(\mathfrak{sl}_n)$ is 
the quotient of
${\mathfrak{u}}_{q,q^{-1}}({\mathfrak{sl}}_n)$ by the ideal generated by all $\omega_i'-\omega_i^{-1}$.  Under the hypotheses of the theorem, each simple ${\mathfrak{u}}_{q,q^{-1}}({\mathfrak{sl}}_n)$-module factors
uniquely as a tensor product of a one-dimensional ${\mathfrak{u}}_{q,q^{-1}}({\mathfrak{sl}}_n)$-module and a simple module
for $\mathfrak{u}_{q}(\mathfrak{sl}_n)$, as long as  $\mbox{gcd}(n, \ell) =1$
by \cite[Thm.~2.12]{pereira2}.
The converse holds in general:
Any $\mathfrak{u}_{q}(\mathfrak{sl}_n)$-module becomes a
${\mathfrak{u}}_{q,q^{-1}}({\mathfrak{sl}}_n)$-module via the quotient map
${\mathfrak{u}}_{q,q^{-1}}({\mathfrak{sl}}_n) \rightarrow \mathfrak{u}_{q}(\mathfrak{sl}_n)$,
and we may take the tensor product of a simple $\mathfrak{u}_{q}(\mathfrak{sl}_n)$-module with any one-dimensional
${\mathfrak{u}}_{q,q^{-1}}({\mathfrak{sl}}_n)$-module to get a simple
${\mathfrak{u}}_{q,q^{-1}}({\mathfrak{sl}}_n)$-module.
The one-dimensional modules are described in the next section.}
\end{rem}

\section{Computations}

Throughout this section we assume $r = q^y, s = q^z$, where $q$ is a primitive $\ell$th root of unity
and  $\mbox{\rm lcm}(|r|,|s|) = \ell$.  We begin by computing all the
one-dimensional Yetter-Drinfeld modules for $H_{r,s}$.   When  (\ref{gcd-condition}) is
satisfied, this gives the one-dimensional modules for $\mathfrak{u}_{r,s}(\mathfrak{sl}_n)$.
Then we specialize to the case $n = 3$.    In  \cite{pereira1,pereira-web},  the computer algebra system {\sc Singular::Plural} \cite{plural} was used to construct simple $\mathfrak{u}_{r,s}({\mathfrak{sl}}_3)$-modules
for many values of $r$ and $s$.  Here we give a brief discussion of
the calculations from the point of view of the results of this paper and present some of the  examples.   The examples illustrate
how widely different the dimensions of the simple $\mathfrak{u}_{r,s}({\mathfrak{sl}}_3)$-modules
can be when the Borel subalgebras are not related by cocycle twists.
\medskip

The one-dimensional Yetter-Drinfeld modules for $H= H_{r,s}$  have the
form $H \db g$ for some $g \in G = \langle \omega_i' \mid 1 \leq i < n\rangle$.
Combining the definition
of the $\db$ action in (\ref{beta-action}) with  the coproduct formulas in $H$, we have
for all $x\in H$ and $g\in G(H)$,
\begin{equation}\label{action_f1}
f_i \db x =  xS(f_i) + \beta(\w_i')f_ix(\w'_i)^{-1} = -xf_i(\w_i')^{-1} + \beta(\w_i')f_ix(\w_i')^{-1}, 
\end{equation}
and 
 \begin{equation}\label{eq:action_w1}
\w'_i \db g = \beta(\w_i')\w_i'g(\w'_i)^{-1} = \beta(\w_i')g.
\end{equation}
Since $H \db g$ is one-dimensional, $H \db g = \k g$.   Now if $g = (\omega_1')^{d_1}  \cdots (\omega_{n-1}')^{d_{n-1}}$,   then
\begin{equation*} g f_i = r^{d_i-d_{i-1}} s^{d_{i+1}-d_i} f_i g, \end{equation*}
where $d_0 = 0 = d_n$.   Thus, equation (\ref{action_f1}) for $x=g$ implies that

\begin{equation}\label{eq:fgswap}  f_i \db g = -r^{d_i-d_{i-1}} s^{d_{i+1}-d_i} f_i g
(\omega_i')^{-1} + \beta(\omega_i')f_i g (\omega_i')^{-1} \qquad \hbox{\rm for} \ \ 1 \leq i < n.
\end{equation}\

\noindent Since by the PBW basis theorem, the right side is a multiple of $g$
only if it is zero, we obtain that
\begin{equation}\label{eq:betadef}  \beta(\omega_i') = r^{d_i-d_{i-1}} s^{d_{i+1}-d_i} \qquad \hbox{\rm for} \ \  1 \leq i < n.
\end{equation}
Hence, each module of the
form $H \db g$ which is one-dimensional  is determined by $g$.
Conversely,  every $g = (\omega_1')^{d_1}  \cdots (\omega_{n-1}')^{d_{n-1}}  \in G$ determines a one-dimensional module $H \db g$ where
 $\beta$ is given by (\ref{eq:betadef}).    Therefore,
the number of nonisomorphic one-dimensional modules is
$\ell^{n-1}$.   In summary we have the following result, due originally
to the second author (compare \cite[Prop. 2.10]{pereira2}).

\begin{prop}  Assume $r=q^y$, $s=q^z$, where $q$ is a primitive $\ell$th root of unity
and $\mbox{\rm lcm}(|r|,|s|) = \ell$.
Then the one-dimensional Yetter-Drinfeld modules for $H_{r,s}:=(\mathfrak{b}_{r,s}')^{\coop}$ are
in bijection with the elements of $G = \langle \omega_i' \mid 1 \leq i < n\rangle$, under the correspondence  taking $g =(\omega_1')^{d_1}  \cdots (\omega_{n-1}')^{d_{n-1}}$ to the one-dimensional module
$H \db g = \k g$, where $\beta$ is given by (\ref{eq:betadef}).
Thus, when (\ref{gcd-condition}) holds, the one-dimensional modules for
$\ursn$ are in bijection with the elements of $G$.   \end{prop}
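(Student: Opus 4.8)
The plan is to realize each one-dimensional Yetter-Drinfeld module for $H=H_{r,s}$ explicitly via Radford's construction (Lemma~\ref{lemma-beta-action} and Theorem~\ref{radford}) and to pin down exactly which pairs $(\beta,g)$ yield a one-dimensional module $H\db g$. Since every simple Yetter-Drinfeld $H$-module has the form $H\db g$ for some $g\in G(H)$ and $\beta\in\widehat G$, the task reduces to determining when $H\db g$ collapses to the single line $\k g$. First I would observe that $H\db g$ always contains the grouplike $g$ (indeed $\w_i'\db g=\beta(\w_i')g$ by \eqref{eq:action_w1}, so $\k g$ is a submodule), and then analyze how the degree-one generators $f_i$ act on $g$ using \eqref{action_f1}, since these are the elements that can push $g$ out of $\k g$.

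The key computation is to rewrite $f_i\db g$ using the coproduct of $f_i$ and the commutation relation between $g=(\w_1')^{d_1}\cdots(\w_{n-1}')^{d_{n-1}}$ and $f_i$. Combining \eqref{action_f1} with the swap $gf_i=r^{d_i-d_{i-1}}s^{d_{i+1}-d_i}f_ig$ gives \eqref{eq:fgswap}, expressing $f_i\db g$ as a scalar multiple of $f_ig(\w_i')^{-1}$. By the PBW basis theorem, $f_ig(\w_i')^{-1}$ is linearly independent from $g$ whenever it is nonzero, so $H\db g=\k g$ forces this coefficient to vanish for every $i$. That vanishing is precisely equation \eqref{eq:betadef}, namely $\beta(\w_i')=r^{d_i-d_{i-1}}s^{d_{i+1}-d_i}$, which shows that the module being one-dimensional both requires and is completely determined by this formula for $\beta$ in terms of $g$. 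Conversely, for any $g\in G$ one defines $\beta$ by \eqref{eq:betadef}; then \eqref{eq:fgswap} shows $f_i\db g=0$ while \eqref{eq:action_w1} keeps $g$ fixed up to scalar, so $H\db g=\k g$ really is one-dimensional. I should also check that distinct $g$ give nonisomorphic modules, which follows from Theorem~\ref{radford} because the correspondence $(\beta,g)\mapsto[H\db g]$ is a bijection and \eqref{eq:betadef} makes $\beta$ a function of $g$, so the assignment $g\mapsto(\beta,g)$ is injective.

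The counting statement is then immediate: the one-dimensional modules are in bijection with $G=\langle\w_i'\mid 1\le i<n\rangle\cong(\Z/\ell\Z)^{n-1}$, giving exactly $\ell^{n-1}$ of them. The final clause, that these become the one-dimensional $\ursn$-modules when \eqref{gcd-condition} holds, follows directly from Theorem~\ref{thm:majid} and Theorem~\ref{double}, which identify $\ursn$-modules with Yetter-Drinfeld $H_{r,s}$-modules under the gcd hypothesis. The main obstacle is purely the bookkeeping in the key computation: one must correctly apply the antipode $S(f_i)=-f_i(\w_i')^{-1}$ and the coproduct $\Delta(f_i)=1\ot f_i+f_i\ot\w_i'$ inside the $\db$-action, and then invoke PBW linear independence at the right moment to conclude that the $f_i$-coefficient must be zero rather than merely that $f_i\db g$ lies in $\k g$. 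Everything else is formal once \eqref{eq:betadef} is in hand.
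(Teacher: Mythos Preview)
Your approach is correct and matches the paper's own argument essentially step for step: compute $f_i\db g$ via \eqref{action_f1} and the commutation relation, invoke PBW linear independence to force the coefficient to vanish, deduce \eqref{eq:betadef}, and then run the converse. One small slip: the parenthetical ``so $\k g$ is a submodule'' is premature, since stability under the $\omega_i'$ alone does not make $\k g$ an $H$-submodule---but this plays no role in the actual argument, which proceeds exactly as it should.
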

\medskip

\noindent {\bf Techniques used in {\sc  Singular::Plural}}\medskip

We now discuss computer calculations done for small values of
$\ell$ and $n = 3$.   In order to use {\sc Singular::Plural}, the
algebras must be given by generators and relations of
a particular form which allows computations using Gr\"obner bases.
Details on the types of algebras involved may be found in \cite{Apel,levan}.

 Let $\mathcal{B}'$ be the subalgebra of $U_{r,s}(\mathfrak{sl}_3)$ generated by $\left\{f_1, f_2, \w_1', \w_2'\right\}$.
Adding the element $\F_{21}=f_2f_1-sf_1f_2$ to the generating set, and rewriting the relations of Definition \ref{quantum-group}, we see that
$\mathcal{B}'$ is generated by

$\left\{x_1=f_1,\, x_2=\F_{21},\, x_3=f_2,\, x_4=\w_1', \, x_5=\w_2'\right\}$, subject to relations
$$\left\{ x_jx_i= C_{ij}x_ix_j +D_{ij}\mid  1 \leq i <j \leq 5 \right\}$$
where the coefficients $C_{ij}$ and elements $D_{ij}$ are given as follows:
\begin{enumerate}
\item [(1)]  $C_{12}=C_{23}=C_{25}=r$,
\item [(2)] $C_{13} = C_{15} =s$,
\item [(3)] $C_{24}=s^{-1}$,
\item [(4)]  $C_{34}=r^{-1}$,
\item [(5)] $C_{14}= C_{35} =rs^{-1}$,
\item [(6)] $C_{45} = 1$,
\item [(7)] $D_{ij}=0$ if $(i,j)\neq (1,3)$ and $D_{13}=\F_{21}$.
\end{enumerate}

Let $\mathcal{I}$ be the two-sided ideal of $\mathcal{B}'$ generated by the set $$\left\{ (\w_1')^{\ell} -1, \, (\w_2')^{\ell}-1,\, f_1^{\ell},\, \F_{21}^{\ell}, \, f^{\ell}_2 \right\}, $$
so that $H=H_{r,s}=(\mathfrak{b}_{r,s}')^{\coop}= \mathcal{B}'/\mathcal{I}$.

 Equation \eqref {eq:action_w1} shows that if $g \in G(H)$, then $H\db g$ is spanned by $$\left\{ (f_1^{k}\F_{21}^tf_2^m) \db g \mid 0 \leq k,\, t, \, m < \ell \right\}.$$

Applying equation (\ref{action_f1}) recursively, we may define a
procedure {\verb Beta } so that if $0\leq k,\, t, \, m < \ell$, \, $h \in H$, and $\beta: H \to \k$ is an algebra map given by $\beta(f_1)=q^a$ and $\beta(f_2)=q^b$, then {\verb Beta(a,b,k,t,m,h) } gives $(f_1^k\F_{21}^tf_2^m) \db h$.
Fix a grouplike element $g= (\w'_1)^{d_1}(\w'_2)^{d_2} \in H$.
In what follows we will construct a basis and compute the dimension of the module $H\db g$.
Some of the code for computing these bases, written by the second author, may be found in \cite{pereira1,pereira-web}.
 Let $$\F_{\ell} =\{f_1^{k}\F_{21}^{t}f_2^{m} \mid  \, 0 \leq k, \, t , \, m < \ell\}$$
 (so that $H\db g = \mathsf{span}_\k \{ f \db g \mid \, f \in \F_{\ell} \}$).
Consider the linear map $T_{\beta}:  \mathsf{span}_\k \F_{\ell} \to H$ given by $T_{\beta}(f) = f \db g$, and construct the matrix $M$ representing $T_{\beta}$ in the bases $\F_{\ell}$ and $\left\{ fh \mid  f \in \F_{\ell}, h\in G(H)\right\}$ of $\mathsf{span}_\k\F_{\ell}$ and $H$ respectively. Then $\dim(H \db g) = \mbox{rank}(M)$, and the nonzero columns of the column-reduced Gauss normal form of $M$ give the coefficients for the elements of a basis of $H\db g$.

Note that
$\dim(H)=\ell^5$ and $\dim\left(\mathsf{span}_\k \F_{\ell}\right)={\ell}^3$, so the size of $M$ is ${\ell}^5 \times {\ell}^3$. Computing the Gauss normal form of these matrices is an expensive calculation even for small values of $\ell$ such as $\ell=5$. However, by some reordering of $\F_{\ell}$ and of the PBW basis of $H$, $M$ is block diagonal. We explain this next.

For a monomial
$m=f_1^{a_1}\F_{21}^{a_2}f_2^{a_3}(\w'_1)^{a_4}(\w_2')^{a_5}$, let $\mbox{deg}_1(m) = a_1 + a_2$ and $\mbox{deg}_2(m) = a_2 + a_3$.
Since $\F_{21} = f_2f_1 - sf_1f_2$ and (\ref{action_f1}) is homogeneous in $f_1$
and in $f_2$,
$m \db x$ is a linear combination of monomials whose degrees are  $\mbox{deg}_i(m) + \mbox{deg}_i(x)$.
 For all $0 \leq u, v < 2\ell$, let $$A_{(u,v)}= \{ f \in \F_{\ell} \mid \mbox{deg}_1(f)=u \mbox{ and } \mbox{deg}_2(f)=v \}$$  and $$B_{(u,v)}= \{ f(\w_1')^{-u}(\w'_2)^{-v}g \mid f \in A_{(u,v)} \}. $$ Then for all $f \in A_{(u,v)}$, we have $f \db g \in \mathsf{span}_\k B_{(u,v)}$.
 The possible pairs $(u,v)$ are such that $0 \leq u, v \leq 2(\ell -1)$,  and if $u >v$ (resp. $v>u$), then $|u-v|$ is the minimum power of $f_1$ (resp. $f_2$) that must be a factor of a monomial in $A_{(u,v)}$.  
Therefore $|u-v| \leq \ell -1$;  that is, $u -( \ell -1) \leq v \leq u + \ell - 1$.
Another way of describing the sets $A_{(u,v)}$ and $B_{(u,v)}$ is as follows.
\begin{eqnarray*} A_{(u,v)} &=& \{ f_1^{u-i}\F_{21}^if_2^{v-i} \mid  i \in \N \mbox{ and } 0\leq u-i,\, i, \,v-i \leq  \ell -1 \} \\
&=& \{ f_1^{u-i}\F_{21}^if_2^{v-i} \mid  i \in \N \mbox{ and } \mathsf{n}_{u,v} \leq i \leq \mathsf{m}_{u,v} \},
\end{eqnarray*}
where $\mathsf{n}_{u,v}=\max(0, \ell-1-u, \ell -1 -v)$ and $\mathsf{m}_{u,v}= \min(\ell-1, u, v)$.
Since $(\w_i')^{-1}=(\w'_i)^{\ell-1}$, if $g=(\w_1')^{d_1}(\w'_2)^{d_2}$ we also have
$$B_{(u,v)}= \{  f(\w_1')^{(\ell-1)u+d_1}(\w'_2)^{(\ell-1)v+d_2} \mid  f \in A_{(u,v)} \}.$$
It is clear that $\F_{\ell} = \bigcup_{(u,v)} A_{(u,v)}$, a disjoint union,
and $H\db g= \bigoplus_{(u,v)} \mathsf{span}_\k B_{(u,v)}$. Therefore the disjoint union of bases for $\left(\mathsf{span}_\k A_{(u,v)}\right) \db g$ for all possible pairs $(u,v)$ gives a basis for $H\db g$, and thus $\dim (H \db g) = \sum_{(u,v)} \dim\left ((\mathsf{span}_\k A_{(u,v)}) \db g\right)$.
\bigskip

\noindent {\bf Examples of computations}
\medskip

We now present just a few examples computed using these techniques; more examples
may be found in \cite{pereira-web}.
These examples show that for some values of the parameters, we obtain a significantly different representation theory
for the two-parameter quantum group $\urs$ than for any one-parameter quantum group $\ufn$.

\begin{example}\label{example1}{\em
Let $q$ be a primitive 4th root of 1 and take $n=3$.
We compare the dimensions of the simple Yetter-Drinfeld modules
for $H_{1,q}$ and $H_{q,q^{-1}}$.
These Hopf algebras have the same dimension, but
by Theorem \ref{prop:findim}, there is no 2-cocycle $\sigma$ for which $(H_{1,q})^{\sigma}$ is isomorphic to $H_{q,q^{-1}}$.
Considering the dimension sets below, we see that the representation theories of the two algebras are quite different.  The results are displayed as multisets of dimensions, where each dimension is raised to the
number of nonisomorphic simple Yetter-Drinfeld modules of that dimension.  
\begin{eqnarray}\label{eq: dims} & \hskip - 2 truein \dim \left( H_{1,q} \db g \right),  \, g \in G(H_{1,q}), \, \beta \in \widehat{G(H_{1,q})}:  \\
&\qquad \qquad \qquad \{ 1^{16}, 3^{32}, 6^{32}, 8^{16}, 10^{32},12^{32}, 24^{32}, 26^{16}, 42^{32}, 64^{16}\} \nonumber \end{eqnarray}
 \begin{equation}\label{eq: dims2}  \dim \left( H_{q,q^{-1}} \db g \right), \,  g \in G(H_{q,q^{-1}}), \, \beta \in \widehat{G(H_{q,q^{-1}})}:  \quad \ \ 
  \{1^{16}, 3^{32}, 8^{16}, 16^{96}, 32^{96} \}. \end{equation}

\medskip

\noindent If for some $\gamma \in \widehat{G(H)}$ and $g\in G(H)$, \ $H\DOT_\gamma g$ is one-dimensional,  then $(H \DOT_{\gamma^{-1}}g^{-1})
\otimes (H \DOT_\gamma  g)
\cong H \DOT_\varepsilon 1 \cong H/\ker \varepsilon$, where $H \DOT_{\gamma^{-1}}g^{-1}$ is the one-dimensional module constructed from $g^{-1} \in G(H)$ (compare \cite[p.~961] {pereira2}).
Since 
$\left(H\DOT_\varepsilon 1\right) \otimes M \cong M$
for all modules $M \in$\YD ,  tensoring a simple Yetter-Drinfeld module with a one-dimensional module
yields another simple Yetter-Drinfeld module of the same dimension.    Because there are 16 one-dimensional modules for each of these algebras, this accounts for the fact that the superscripts 
in \eqref{eq: dims}  and \eqref{eq: dims2} are multiples
of 16.  

 The remaining cases $H_{r,s}$, where $r,s$ are 4th roots of 1 and $\mbox{lcm}(|r|,|s|)=4$,
are all related to the ones in \eqref{eq: dims} and \eqref{eq: dims2}:
By Theorem \ref{prop:findim}, $H_{q,q^2}$, $H_{q^2,q^3}$, and $H_{q^3,1}$ are
all cocycle twists of $H_{1,q}$.   Now $q^3$ is also a primitive 4th root of 1, and so the representation theory of
$H_{1,q^3}$ is the same  as that of $H_{1,q}$, and cocycle twists of $H_{1,q^3}$
are $H_{q,1}$, $H_{q^2,q}$, and $H_{q^3,q^2}$.
Finally, $H_{q^3,q}$ is a cocycle twist of $H_{q,q^3}$.
In fact, it follows from the quantum group isomorphisms in \cite[Thm.\ 5.5]{hu-wang}, that 
$H_{1,q} \cong H_{q^3,1}$, 
 $H_{q,1} \cong H_{1,q^3}$, $H_{q^2,q} \cong H_{q^3,q^2}$,  and $H_{q,q^2}\cong H_{q^2,q^3}$.}
\end{example}

\begin{example}[Dimensions of simple $\mathfrak{u}_{r,s}(\mathfrak{sl}_3)$-modules]\label{ex:dims}
{\em
Assume $n=3$, $\mbox{gcd}(6,\ell)=1$, $\mbox{lcm}(|r|,|s|)=\ell$, $rs^{-1}$ is a
primitive $\ell$th root of 1,  and $q$ is a primitive $\ell$th root of 1 such that
$q^2=rs^{-1}$. By Theorem \ref{prop:findim},
 $H_{r,s}^{\sigma}\cong H_{q,q^{-1}}$.
The dimensions of the simple Yetter-Drinfeld $H_{q,q^{-1}}$-modules are
given by the dimensions of simple $\mathfrak{u}_{q}(\mathfrak{sl}_3)$-modules;  the latter can be found in \cite{dobrev}.
Hence, this gives formulas for the dimensions of the simple
$\mathfrak{u}_{r,s}(\mathfrak{sl}_3)$-modules when it is a Drinfeld double.
If it is not a Drinfeld double, we just get the dimensions for the simple
Yetter-Drinfeld $H_{r,s}$-modules and for the simple modules of its double $D(H_{r,s})$.

Write $r=q^y$ and $s=q^z$. Let $g= \w_1'^{d_1}\w_2'^{d_2} \in G$ and $\beta: \k G \to \k$ be an algebra map given by $\beta(\w_i') = q^{\beta_i}$.
Taking $\sigma= \sigma_{r,q}$ (see the proof of Theorem \ref{prop:findim}),
we have  $(H_{r,s} \db g)^{\sigma} \cong H_{q,q^{-1}} \DOT_{\gamma} g$ by Theorems \ref{modules-twist} and \ref{prop:findim} and \eqref{eq:bgsig}, where $\gamma(\w_i')=q^{\gamma_i}$ with $$\gamma_1= \beta_1 -d_2(y-1) \quad \mbox{and} \quad \gamma_2=\beta_2+d_1(y-1).$$
Since $\mbox{gcd}(6, \ell)=1$, we have $ H_{q,q^{-1}} \DOT_{\gamma} g \cong M \otimes N$ with $M$ a simple $\mathfrak{u}_{q}(\mathfrak{sl}_3)$-module and $N$ a one-dimensional
$\mathfrak{u}_{q, q^{-1}}(\mathfrak{sl}_3)$-module by \cite[Thm.~2.12]{pereira2} (compare Remark \ref{rem:factor}). In particular,  $\mbox{dim}\left(H_{r,s} \db g\right) = \mbox{dim}\left(H_{q,q^{-1}} \DOT_{\gamma} g \right) = \mbox{dim}(M)$.
By the proof of \cite[Thm.~ 2.12]{pereira2},
$M = H_{q,q^{-1}} \DOT_{\beta_h} h$, where $h=\w_1'^{c_1}\w_2'^{c_2}$, $(c_1,c_2)$ a
solution of the following system of equations in $\Z/\ell \Z$,
\begin{equation*}
\left(
\begin{array}{lr}
\ \ \Id &\ \ \ \mathsf{A}^{-1} \\
-\mathsf{A} &\Id \hfil
\end{array}
\right)
\left(
\begin{array}{c}
c_1\\c_2\\ \chi_1\\ \chi_2
\end{array}
\right)
=
\left(
\begin{array}{c}
d_1\\ d_2 \\ \gamma_1\\ \gamma_2
\end{array}
\right),
\end{equation*}
and $\mathsf{A}$ is the Cartan matrix
\begin{equation*}
\mathsf{A}=\left(
\begin{array}{lr}
\ \ 2&-1\\
-1& \ 2
\end{array}
\right);
\end{equation*}
$\beta_h$ is defined by $\beta_h(\omega_1')=q^{c_2-2c_1}$,
$\beta_h(\omega_2')=q^{c_1-2c_2}$;  and the one-dimensional
module $N$ is given by the character $\chi$ of $G(H_{q,q^{-1}})$
specified by $\chi(\omega_1')=q^{\chi_1}$,
$\chi(\omega_2')=q^{\chi_2}$.

Let $m_1$ and $m_2$ be defined by
$$ m_1 \equiv (2c_1 -c_2 +1) \text{mod }\ell,  \quad  m_2 \equiv (2c_2 -c_1+1)\text{mod }\ell \quad \mbox{and} \quad 0 < m_i \le \ell.$$
By \cite{dobrev} (see the summary at \cite{pereira-web}) and the above
discussion, we have:
\begin{itemize}
\item
If $m_1 + m_2 \leq \ell$, then $$\dim(H_{r,s}\db g) = \half \Big(m_1m_2(m_1+m_2)\Big).$$
\item
If $m_1 + m_2 > \ell$ and  $m_i'=\ell - m_i$, then
$$\dim (H_{r,s}\db g) = \half \Big(m_1m_2(m_1+m_2)\Big)- \half \Big(m_1'm_2'(m_1'+m_2')\Big) .$$\end{itemize}

The parameters $m_1$ and $m_2$ can be obtained from the original data:
\begin{equation}\label{eq: dims3}m_1\equiv (d_1-d_2 +\half(d_2y-\beta_1) + 1)\,  \text{mod }\ell,  \ \  m_2 \equiv (d_2 -\half(yd_1 + \beta_2) + 1)\, \text{mod }\ell.
\end{equation}
}
\end{example}

\begin{remark}
{\rm Suppose in addition to the assumptions of Example \ref{ex:dims} that hypothesis (\ref{gcd-condition}) also holds.  Let $G_C$ denote the set of central grouplike elements of $\mathfrak{u}_{r,s}(\mathfrak{sl}_3)$,  and
let $\mathcal J$ be the ideal of $\mathfrak{u}_{r,s}(\mathfrak{sl}_3)$ generated by  $\{h-1 \mid h \in G_C\}$. It was shown in \cite[Prop.~2.7]{pereira2} that the simple modules for the quotient $\overline{\mathfrak{u}_{r,s}(\mathfrak{sl}_3) }=\mathfrak{u}_{r,s}(\mathfrak{sl}_3) /\mathcal J$ are of the form $H \DOT_{\beta_g} g$,  where if $g= \w_1'^{d_1}\w_2'^{d_2}$, then $\beta_g(\w_i')=q^{\beta_i}$ with $\beta_1= (z-y)d_1 + d_2y$
and $\beta_2=-zd_1+(z-y)d_2$. Using the formulas in (\ref{eq: dims3}) and the fact that $y-z \equiv 2\,  \text{ mod } \ell$ (since $q^2=rs^{-1}$),  we get
$$m_1\equiv (2d_1-d_2+1)\, \text{mod } \ell ,  \qquad m_2\equiv(2d_2-d_1+1)\, \text{mod } \ell.$$
Thus, for $\overline{\mathfrak{u}_{r,s}(\mathfrak{sl}_3)}$-modules,  the dimensions obtained in Example \ref{ex:dims} are those conjectured in \cite[Conjecture III.4]{pereira1}. Furthermore, under these hypotheses, every simple $\mathfrak{u}_{r,s}(\mathfrak{sl}_3)$-module is a tensor product of a one-dimensional module with a simple $\overline{\mathfrak{u}_{r,s}(\mathfrak{sl}_3) }$-module
by \cite[Thm.\ 2.12]{pereira2}. Thus, in order to determine all possible dimensions of simple $\mathfrak{u}_{r,s}(\mathfrak{sl}_3)$-modules, it is enough to use the formulas in Example \ref{ex:dims} with $m_1\equiv (2d_1-d_2+1) \text{mod } \ell$ and $m_2\equiv(2d_2-d_1+1)\, \text{mod } \ell$, for all $d_1, d_2 \in \left\{0, 1, \cdots, \ell-1\right\}$.
}
\end{remark}
\begin{center} {\bf Acknowledgments}  \end{center}
We thank Naihong Hu for providing us with a copy of \cite{hu-wang}
and Yorck Sommerh\"auser for bringing the paper \cite{majid-oeckl} to our attention.   
The second author is grateful for support from the grant
ANII FCE 2007-059.
The third author is grateful for support from the following grants:
NSF DMS-0443476, NSA H98230-07-1-0038, and NSF DMS-0800832.

\end{document}